\newtheorem{theorem}{Theorem}[section]
\newtheorem{lemma}[theorem]{Lemma}
\newtheorem{fact}[theorem]{Fact}
\newtheorem{proposition}[theorem]{Proposition}
\newtheorem{corollary}[theorem]{Corollary}
\theoremstyle{definition}
\newtheorem{definition}[theorem]{Definition}
\newtheorem{example}[theorem]{Example}
\newtheorem{conjecture}{Conjecture}
\newcommand{\newcategory}[1]{\expandafter\newcommand\csname #1\endcsname{\mathbf{#1}}}
\renewcommand{\Q}{Q^+}
\newcommand{\pp}{{\perp\perp}}
\newcommand{\ud}{{\uparrow\downarrow}}
\newcommand{\du}{{\downarrow\uparrow}}
\newcommand{\compl}[1]{\widehat{#1}}
\newcommand{\cl}{\mathrm{cl}}
\newcommand{\pow}{\mathcal{P}}
\newcommand*{\vcenteredhbox}[1]{\begingroup
\setbox0=\hbox{#1}\parbox{\wd0}{\box0}\endgroup}
\begin{document}
\title{Orthogonality spaces associated with posets}
\author{\fnm{Gejza} \sur{Jenča}}\email{gejza.jenca@stuba.sk}
\affil{\orgdiv{Department of Mathematics and Descriptive Geometry, Faculty of Civil Engineering}, \orgname{Slovak University of Technology},
\orgaddress{\street{Radlinského 11},
\city{Bratislava},
\postcode{810 05},
\country{Slovakia}}}

\abstract{
An orthogonality space is a set equipped with a symmetric, irreflexive relation
called orthogonality. Every orthogonality space has an associated complete
ortholattice, called the logic of the orthogonality space. To every poset, we
associate an orthogonality space consisting of proper quotients (that means,
nonsingleton closed intervals), equipped with a certain orthogonality relation.
We prove that a finite bounded poset is a lattice if and only if the logic
of its orthogonality space is an orthomodular lattice. We prove that that
a poset is a chain if and only if the logic of the associated orthogonality space
is a Boolean algebra.
}
\keywords{orthogonality space, Dacey space, poset, lattice}
\pacs[MSC 2020]{06A06,06C15}

\maketitle

\section{Introduction}

In his PhD. thesis \cite{dacey1968orthomodular}, Dacey explored the notion of
``abstract orthogonality'', by means of sets equipped with a symmetric,
irreflexive relation $\perp$. He named these structures {\em orthogonality spaces}. Every
orthogonality space has an orthocomplementation operator $X\mapsto X^\perp$
defined on the set of all its subsets. Dacey proved that $X\mapsto X^\pp$ is a
closure operator and that the set of all closed subsets of an orthogonality space forms
a complete ortholattice, which we call {\em the logic of an orthogonality space}.
Moreover, Dacey gave a characterization of orthogonality spaces such that their logic is
an orthomodular lattice. The orthogonality spaces of this type are nowadays called {\em Dacey
spaces}.

Let us remark that (as a special case of a {\em polarity between two sets}) the
idea of an orthogonality space and its associated lattice of closed sets
appears already in the classical monograph \cite[Section
V.7]{birkhoff1948lattice}.  Nevertheless, Dacey was probably the first person
that explored the orthomodular law in this context.

Since orthogonality spaces are nothing but (undirected, simple, loopless)
graphs, it is perhaps not surprising that the $X\mapsto X^\perp$ mapping appears
under the name {\em neighbourhood operator} in graph theory. Implicitly, the logic
associated with the neighbourhood operator was used in the seminal paper
\cite{lovasz1978kneser}, in which Lovász proved the Kneser's conjecture.
Explicitly, they were used by Walker in \cite{walker1983graphs}.
In that paper, the ideas from the \cite{lovasz1978kneser} paper were generalized and reformulated in the language of category theory.

For an overview of the results on orthogonality spaces and more general
{\em test spaces}, see \cite{wilce2011test}. See also \cite{paseka2022normal,paseka2022categories} for some more recent results
on orthogonality spaces.

In this paper, we construct an orthogonality space $(\Q(P),\perp)$ 
from every poset $P$. The elements of $\Q(P)$ are pairs $(a,b)$ of elements of $P$
with $a<b$ and the orthogonality relation can be described as ``to be above/below each other
in some chain'' (see \Cref{fig:orthogonality}). Then we examine how are the properties
of $P$ reflected in the structure of the orthogonality space $(\Q(P),\perp)$ and its logic. In the last section,
we describe some straightforward connections between the logic of $\Q(P)$ and two
constructions: the Kalmbach construction and the MacNeille completion.

\section{Preliminaries}

\subsection{Posets, lattices}

A poset $P$ is {\em lower bounded} if it has the smallest element, which we denote by $0$. Similarly, the greatest element of an {\em upper bounded} poset is denoted by $1$. A poset that is both upper and lower bounded is called {\em bounded}.

If $X$ is a subset of a poset $P$, then $a$ is a {\em lower bound of $X$} if and only if for all
$x\in X$, $a\leq x$.  For every set $X\subseteq P$, we write $X^\downarrow$ for
the set of all lower bounds of $X$. The notions of an {\em upper bound}
and $X^\uparrow$ are defined dually. For a singleton subset $\{x\}$, we abbreviate $\{x\}^\downarrow=x^\downarrow$ and $\{x\}^\uparrow=x^\uparrow$.
Note that $\emptyset^\uparrow=\emptyset^\downarrow=P$.

A poset $P$ is a {\em complete lattice} if and only if for all subsets $X$ of $P$,
$X^\uparrow$ has the smallest element, denoted by $\bigvee X$ and called {\em
the smallest upper bound of $X$}. In every complete lattice $P$, 
every subset $X$ of $P$ has a {\em greatest lower bound}, denoted by $\bigwedge X$. We say that a poset $P$ is {\em a lattice} if and only if for all $x,y\in P$, $\bigwedge\{x,y\}$ and $\bigvee\{x,y\}$ exist. In a lattice, we write $x\wedge y=\bigwedge\{x,y\}$ and $x\vee y=\bigwedge\{x,y\}$. A bounded lattice $P$ is {\em complemented} if for every
$x\in P$ there is a $x'\in P$ such that $x\wedge x'=0$ and $x\vee x'=1$. The element $x'$ is then called {\em a complement of $x$}.

A subset $I$ of a poset $P$ is an {\em order ideal of $P$} if for every $x\in I$, $x^\downarrow\subseteq I$. The
dual notion is {\em order filter}.
An order ideal $I$ of a poset $P$ is {\em closed} if and only if $I=I^{\ud}$.
It is well known \cite{macneille1937partially} that every poset $P$ embeds into a complete lattice, called
the {\em Dedekind-MacNeille completion of $P$}, denoted by $\compl P$. The
poset $\compl P$ consists of all closed order ideals of $P$, ordered by
inclusion. There is a canonical embedding $\eta_P\colon P\to\compl P$, given by
the rule $\eta_P(x)=x^\downarrow$. It is well-known that $P$ is a complete lattice
if and only if $\eta_P$ is surjective. Since every finite lattice is a complete lattice, this
gives us a characterization of finite non-lattices, as follows.

\begin{fact}\label{fact:twomaxes}
A finite poset $P$ is a non-lattice if and only if there is a closed order ideal
$I$ with at least two maximal elements.
\end{fact}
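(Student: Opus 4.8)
The plan is to translate the statement into one about surjectivity of the canonical embedding $\eta_P\colon P\to\compl P$. Since $P$ is finite, it is a lattice if and only if it is a complete lattice, and the latter holds exactly when $\eta_P$ is surjective; so $P$ is a non-lattice precisely when some closed order ideal $I$ fails to be of the form $x^\downarrow$. The whole argument then rests on a single observation: for a \emph{nonempty} closed order ideal $I$ of a finite poset, $I$ is principal (that is, $I=x^\downarrow$ for some $x$) if and only if $I$ has a unique maximal element. First I would prove this. If $I=x^\downarrow$ then $x$ is the greatest element of $I$, hence its unique maximal element. Conversely, if a nonempty finite $I$ has a unique maximal element $m$, then, using that in a finite poset every element lies below some maximal element, every $a\in I$ satisfies $a\le m$, so $I\subseteq m^\downarrow$; since $I$ is an order ideal containing $m$ we also have $m^\downarrow\subseteq I$, whence $I=m^\downarrow$.

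For the reverse implication of the Fact, suppose some closed order ideal $I$ has two distinct maximal elements $a$ and $b$. Then $I$ cannot be principal, since a principal ideal $x^\downarrow$ has $x$ as its unique maximal element. Thus $I$ lies outside the image of $\eta_P$, so $\eta_P$ is not surjective, $P$ is not a complete lattice, and therefore, being finite, not a lattice. This direction needs nothing beyond the observation above.

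For the forward implication, assume $P$ is a finite non-lattice, so $\eta_P$ is not surjective and some closed order ideal $I$ is non-principal. If $I$ is nonempty then, by the observation, it cannot have a unique maximal element; being a nonempty finite poset it has at least one maximal element, hence at least two, and we are done. The main obstacle is the remaining case $I=\emptyset$: the empty set is an order ideal that is never principal, and it is closed precisely when $\emptyset=\emptyset^{\ud}=P^\downarrow$, that is, when $P$ has no least element. I expect this to be the delicate point, and the place where the hypotheses of the intended application enter: when $P$ is lower bounded the empty ideal is not closed (as $\emptyset^{\ud}=\{0\}$), so the only non-principal closed ideals are the nonempty ones analysed above, and the forward direction goes through. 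I would therefore carry out the argument in that setting, flagging the empty ideal as the one configuration that must be excluded for the equivalence to hold.
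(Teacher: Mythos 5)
Your proposal follows essentially the route the paper intends: the paper states this Fact without proof, presenting it as an immediate consequence of the preceding remarks that $P$ is a complete lattice if and only if $\eta_P$ is surjective, together with the completeness of finite lattices; your argument simply fleshes out that derivation via the correct key observation that a nonempty closed order ideal of a finite poset is principal if and only if it has a unique maximal element. What deserves emphasis is that your flag about the empty ideal is not pedantry but a genuine catch: as stated, for arbitrary finite posets, the Fact is false. Take $P=\{a,b,1\}$ with $a<1$, $b<1$ and $a,b$ incomparable. This is a non-lattice (the pair $a,b$ has no lower bound, hence no meet), yet its closed order ideals are exactly $\emptyset$, $a^\downarrow$, $b^\downarrow$ and $1^\downarrow=P$, none of which has two maximal elements; the only non-principal closed ideal is $\emptyset$, which is closed precisely because $P$ has no least element, so it witnesses non-surjectivity of $\eta_P$ without supplying two maximal elements. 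Your repair --- assuming $P$ lower bounded, so that $\emptyset^{\ud}=\{0\}\neq\emptyset$ and every non-principal closed ideal is nonempty --- is exactly what is needed, and it is harmless for the paper, since the Fact is invoked only in the proof of \Cref{thm:main}, where $P$ is assumed bounded. The converse direction, as you note, holds for all finite posets without any boundedness hypothesis.
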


\subsection{Ortholattices, orthomodular lattices}

An {\em ortholattice} is a bounded lattice $(L,\vee,\wedge,0,1,^\perp)$
equipped with an antitone mapping called {\em orthocomplementation}
$\perp\colon L\to L$ such that
\begin{itemize}
\item $0^\perp=1$, $1^\perp=0$
\item $x^\pp=x$
\item $(x\vee y)^\perp=x^\perp\wedge y^\perp$
\item $(x\wedge y)^\perp=x^\perp\vee y^\perp$
\item $x\wedge x^\perp=0$
\item $x\vee x^\perp=1$
\end{itemize}

An ortholattice is $L$ an {\em orthomodular lattice} if it satisfies the 
orthomodular law
$$
x\leq y\implies y=x\vee(y\wedge x^\perp),
$$
for all $x,y\in L$.

Similarly as for modular or distributive lattices, orthomodular lattices
can be characterized among ortholattices by absence of a ``forbidden sublattice''.
\begin{figure}
\begin{center}
\includegraphics{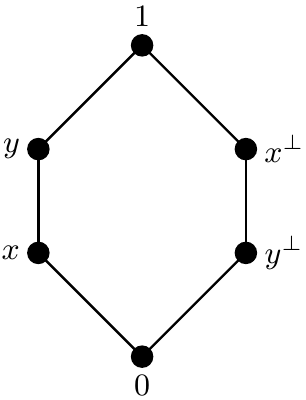}
\end{center}
\caption{The hexagon}
\label{fig:hexagon}
\end{figure}
\begin{proposition}(see for example \cite[Theorem 1.3.2]{kalmbach1977orthomodular})
An ortholattice $L$ is orthomodular if and only if it does not contain a sub-ortholattice isomorphic to {\em the hexagon} (see \Cref{fig:hexagon}).
\end{proposition}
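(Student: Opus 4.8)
The plan is to route both implications through a clean reformulation of the orthomodular law: \emph{$L$ fails to be orthomodular if and only if there is a comparable pair $p<q$ in $L$ with $p^\perp\wedge q=0$.} Granting this reformulation, the proposition drops out quickly, so the work concentrates on proving the reformulation (its harder direction) and on checking that such a pair literally spans a hexagon.

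First I would establish the reformulation. The ``if'' part is immediate: if $p<q$ and $p^\perp\wedge q=0$, then $p\vee(q\wedge p^\perp)=p\vee 0=p\neq q$, so the orthomodular law already fails for the comparable pair $p\le q$. For the converse, suppose orthomodularity fails, witnessed by $a\le b$ with $a\vee(b\wedge a^\perp)\neq b$. The naive pair $(a,b)$ need not satisfy $a^\perp\wedge b=0$, so the key move is to replace $a$ by $c=a\vee(b\wedge a^\perp)$, giving $a\le c<b$. Since $a\le c$ we have $c^\perp\le a^\perp$, hence $c^\perp\wedge b\le a^\perp\wedge b\le c$; combining this with $c^\perp\wedge b\le c^\perp$ yields $c^\perp\wedge b\le c\wedge c^\perp=0$. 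Thus the pair $p=c$, $q=b$ has the required form. This choice of $c$ is the step I expect to be the main obstacle, but as the computation shows it needs only the inequality $a^\perp\wedge b\le c$ and the law $c\wedge c^\perp=0$, so no distributivity is used.

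Next, given a pair $p<q$ with $p^\perp\wedge q=0$, I would verify that $\{0,p,q,1,p^\perp,q^\perp\}$ is a sub-ortholattice isomorphic to the hexagon. From $p\le q$ one gets $q^\perp\le p^\perp$ and $p\wedge q^\perp\le q\wedge q^\perp=0$, so both ``diagonal'' meets $p^\perp\wedge q$ and $p\wedge q^\perp$ vanish; dualising by De Morgan, the matching joins satisfy $p\vee q^\perp=1$ and $q\vee p^\perp=1$. Together with $p\wedge q=p$, $p\vee q=q$, $p^\perp\wedge q^\perp=q^\perp$, $p^\perp\vee q^\perp=p^\perp$, and orthocomplementation interchanging the two chains, this shows the six elements are closed under $\wedge$, $\vee$ and $^\perp$ and are ordered as $0<p<q<1$ and $0<q^\perp<p^\perp<1$. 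A routine check that they are pairwise distinct (each coincidence would force $p=q$, $p=0$, or $q=1$, all excluded) completes the identification with the hexagon.

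Finally I would assemble the proposition. If $L$ contains a copy of the hexagon, labelled $0<a<b<1$ and $0<b^\perp<a^\perp<1$, then $a^\perp\wedge b=0$ is computed inside the sub-ortholattice and therefore agrees with the meet in $L$; by the ``if'' direction of the reformulation, $L$ is not orthomodular. Conversely, if $L$ is not orthomodular, the reformulation produces a pair $p<q$ with $p^\perp\wedge q=0$, and the previous paragraph exhibits the hexagon as a sub-ortholattice of $L$. This gives both directions.
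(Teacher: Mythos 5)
Your proof is correct, and it is essentially the standard argument behind the cited result (Kalmbach, Theorem~1.3.2) --- the paper itself gives no proof, delegating the statement to the reference. Your reformulation (orthomodularity fails iff there is a pair $p<q$ with $p^\perp\wedge q=0$, obtained by replacing a failing witness $a\le b$ with $c=a\vee(b\wedge a^\perp)$) is precisely the classical device, and your verification that $\{0,p,q,p^\perp,q^\perp,1\}$ is closed under the operations and pairwise distinct, together with the observation that meets in a sub-ortholattice agree with meets in $L$, completes both directions soundly.
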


A {\em Boolean algebra} is a distributive ortholattice.
We shall use the following well-known characterization.
\begin{proposition}\cite{cignoli1978deductive}
An ortholattice $L$ is a Boolean algebra if and only if,
for all $a,b\in L$,
\begin{equation}\label{eq:disjisortho}
a\wedge b=0\implies a\leq b^\perp.
\end{equation}
\end{proposition}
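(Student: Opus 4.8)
The plan is to prove the two implications separately; the forward direction is a short distributivity computation, while the reverse direction carries the real content. Assuming $L$ is a Boolean algebra and $a\wedge b=0$, I would simply compute $a=a\wedge 1=a\wedge(b\vee b^\perp)=(a\wedge b)\vee(a\wedge b^\perp)=a\wedge b^\perp$, where the middle equality is distributivity and $b\vee b^\perp=1$ holds in any ortholattice; this yields $a\leq b^\perp$ at once.

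For the converse I would assume \eqref{eq:disjisortho} and first extract a cancellation principle from it: if $c\leq a$ and $a\wedge c^\perp=0$, then applying \eqref{eq:disjisortho} to the pair $(a,c^\perp)$ gives $a\leq(c^\perp)^\perp=c$, so $a=c$. This is exactly the orthomodular law in disguise, since the standard characterization of orthomodularity is that $c\leq a$ together with $a\wedge c^\perp=0$ forces $a=c$; hence \eqref{eq:disjisortho} already makes $L$ an orthomodular lattice.

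The crux is then to show that every pair commutes, that is, $a=(a\wedge b)\vee(a\wedge b^\perp)$ for all $a,b$; since an orthomodular lattice in which all pairs commute is a Boolean algebra (a well-known fact of orthomodular lattice theory), this completes the argument. Writing $c=(a\wedge b)\vee(a\wedge b^\perp)\leq a$, by the cancellation principle it suffices to prove that $d:=a\wedge c^\perp$ equals $0$. Here I would apply \eqref{eq:disjisortho} twice. Since $d\leq a$ and $d\leq c^\perp\leq(a\wedge b)^\perp$, the meet $d\wedge b$ lies below both $a\wedge b$ and $(a\wedge b)^\perp$, whence $d\wedge b=0$ and \eqref{eq:disjisortho} gives $d\leq b^\perp$; symmetrically, using $d\leq(a\wedge b^\perp)^\perp$ one obtains $d\wedge b^\perp=0$ and therefore $d\leq b^\pp=b$. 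Thus $d\leq b\wedge b^\perp=0$, as needed, and $a=c$ follows.

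The main obstacle is this reverse direction, and specifically the observation that the hypothesis \eqref{eq:disjisortho} must be invoked twice, once against $b$ and once against $b^\perp$, in order to trap the defect element $d$ below two orthocomplementary elements and collapse it to $0$. Everything else reduces to the antitonicity of $\perp$ and the absorption inequalities available in any ortholattice.
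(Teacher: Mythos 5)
Your proof is correct, but it takes a genuinely different route from the paper. The paper handles the reverse direction by reducing to unique complementation: it cites the fact (Kalmbach, Proposition 1.3.7) that an ortholattice is Boolean iff it is uniquely complemented, and then shows in three lines that \eqref{eq:disjisortho} forces any complement $y$ of $x$ to satisfy $y\leq x^\perp$ and $x^\perp\leq y$, hence $y=x^\perp$. You instead extract from \eqref{eq:disjisortho} the cancellation principle ($c\leq a$ and $a\wedge c^\perp=0$ imply $a=c$), note that this is the standard characterization of orthomodularity, and then prove the commutation identity $a=(a\wedge b)\vee(a\wedge b^\perp)$ for all pairs by trapping the defect $d=a\wedge c^\perp$ below both $b^\perp$ and $b$ via two applications of \eqref{eq:disjisortho}; finally you cite the Foulis--Holland-type fact that an orthomodular lattice in which all pairs commute is Boolean. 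All your steps check out: $d\wedge b\leq(a\wedge b)\wedge(a\wedge b)^\perp=0$ and $d\wedge b^\perp\leq(a\wedge b^\perp)\wedge(a\wedge b^\perp)^\perp=0$ are legitimate since $c^\perp\leq(a\wedge b)^\perp$ and $c^\perp\leq(a\wedge b^\perp)^\perp$ by antitonicity. The trade-off: the paper's argument is shorter but leans on the nontrivial black box that uniquely complemented \emph{ortholattices} are Boolean (notable because unique complementation does not imply distributivity for general bounded lattices, by Dilworth), while yours does more computation but invokes two more routine standard facts and isolates orthomodularity as an explicit intermediate consequence. One small redundancy in your write-up: once the identity $a=(a\wedge b)\vee(a\wedge b^\perp)$ is established for all pairs, specializing to $b\leq a$ yields the orthomodular law directly, so the separate observation that the cancellation principle ``is the orthomodular law in disguise'' serves only to license the final citation and could be folded in.
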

\begin{proof}
Clearly, every Boolean algebra satisfies \eqref{eq:disjisortho}.

By \cite[Proposition 1.3.7]{kalmbach1977orthomodular}
an ortholattice is a Boolean algebra iff it is uniquely complemented.
We shall prove that \eqref{eq:disjisortho} implies that 
$L$ is uniquely complemented.
Let $y$ be a complement of $x$. Since $x\wedge y=0$, $y\leq x^\perp$
by \eqref{eq:disjisortho}.
Since $x\vee y=1$, 
$$
0=1^\perp=(x\vee y)^\perp=x^\perp\wedge y^\perp.
$$
By \eqref{eq:disjisortho} $x^\perp\wedge y^\perp=0$ implies that $x^\perp\leq y^\pp=y$. Hence
$y=x^\perp$ and $y$ is unique.
\end{proof}

\section{Orthogonality spaces}

\begin{definition}
Let $V$ be a set, write $\pow(V)$ for the powerset of $V$. A mapping $\cl\colon
\pow(V)\to\pow(V)$ is a {\em closure operator} if it satisfies the conditions,
for all $X,Y\in\pow(V)$ 
\begin{enumerate}
\item $X\subseteq\cl(X)$,
\item $X\subseteq Y\implies \cl(X)\subseteq\cl(Y)$,
\item $\cl(\cl(X))=\cl(X)$.
\end{enumerate}
The pair $(V,\cl)$ is then called {\em a closure space}.
\end{definition}
In a closure space $(V,\cl)$, a set $X\subseteq V$ is called {\em closed} if
$\cl(X)=X$. It is easy to see that the set of all closed subsets of a closure space
forms a complete lattice under inclusion.

\begin{definition}
An {\em orthogonality space} $(O,\perp)$ is a set $O$ equipped with an irreflexive symmetric binary relation
$\perp\subseteq O\times O$.
\end{definition}

Let $(O,\perp)$ be an orthogonality space. 
We say that two elements $x,y\in O$ with $x\perp y$ are {\em
orthogonal}. A subset $B$ of $O$ is {\em pairwise orthogonal} if
every distinct pair of elements of $B$ is orthogonal.
Two subsets $X,Y$ of $O$ are {\em orthogonal} (in symbols $X\perp Y$) if 
$x\perp y$, for all $x\in X$ and $y\in Y$. We write $y\perp X$ for
$\{y\}\perp X$.
For every subset $X$ of $O$, we write 
\[
X^\perp=\{y\in O\mid y\perp X\}
\]
so that $X^\perp$ is the greatest subset of $O$ orthogonal to $X$.
Note that the mapping $X\mapsto X^\perp$ on the $(\pow(O),\subseteq)$ is antitone. Clearly, for every family $(X_i)_{i\in H}$ of subsets
of $O$,
$$
\bigl(\bigcup_{i\in H} X_i\bigr)^\perp=\bigcap_{i\in H}X_i^\perp.
$$
Since the orthocomplementation is antitone, the mapping $X\mapsto X^\pp$ is
isotone.
Moreover, for every $X\subseteq O$ we have 
$X\subseteq X^\pp$ and $X^\perp=X^{\pp\perp}$. Consequently, $X\mapsto X^\pp$
is a closure operator.  Note that $s\in X^\pp$ if and only if, for all $a\in
O$,
$$
a\perp X\implies s\perp a.
$$
We say that a subset $X$ of $O$ is {\em orthoclosed} if $X=X^\pp$.
The set of all orthoclosed subsets of an orthogonality space forms a complete ortholattice $L(O,\perp)$,
with meets given by intersection and joins given by $X\vee Y=(X^\perp\cap Y^\perp)^\perp$.
The smallest element of $L(O,\perp)$ is $\emptyset$ and the greatest element of
$L(O,\perp)$ is $O$. We say that $L(O,\perp)$ is {\em the logic} of $O$.

Let $(O,\perp)$ be an orthogonality space, let $X$ be an orthoclosed subset of
$O$. A maximal pairwise orthogonal subset of $X$ is called a {\em basis of $X$}.

In his PhD. thesis, J.C. Dacey proved the following theorem.
\begin{theorem}\label{thm:dacey}\cite{dacey1968orthomodular}
Let $(O,\perp)$ be an orthogonality space. Then $L(O,\perp)$ is an orthomodular lattice if
and only if for every orthoclosed subset $X$ of $O$ and every basis $B$ of $X$, $X=B^\pp$. 
\end{theorem}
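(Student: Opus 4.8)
The plan is to reduce orthomodularity to a standard weak formulation, which I would first record as a lemma: an ortholattice $L$ is orthomodular if and only if, for all $X\le Y$ in $L$, the equality $Y\wedge X^\perp=0$ forces $X=Y$. The nontrivial implication is routine (given $X\le Y$, set $Z=X\vee(Y\wedge X^\perp)$ so that $X\le Z\le Y$; one checks $Y\wedge Z^\perp=0$ using $Z\ge X$, and the hypothesis applied to the pair $Z\le Y$ then yields $Z=Y$, which is the orthomodular law), so the real work lies in translating each side of Dacey's biconditional into this language. I will repeatedly use that in $L(O,\perp)$ the bottom element is $\emptyset$, meets are intersections, $X^\perp$ is orthoclosed for every $X$, and that $X=B^\pp$ implies $X^\perp=B^{\perp\perp\perp}=B^\perp$, so that a closed set and any of its bases have the same orthocomplement.

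For the direction ``Dacey's condition implies orthomodular'', I would take orthoclosed $X\le Y$ with $X\ne Y$ and show $Y\wedge X^\perp\ne 0$. First choose a basis $B_X$ of $X$; since $B_X\subseteq X\subseteq Y$ is pairwise orthogonal, Zorn's lemma extends it to a basis $B_Y$ of $Y$. If $B_Y=B_X$, then $B_X$ is a basis of both $X$ and $Y$, so Dacey's condition gives $X=B_X^\pp=Y$, a contradiction; hence there is some $z\in B_Y\setminus B_X$. This $z$ lies in $Y$ and is orthogonal to every element of $B_X$, so $z\in B_X^\perp=X^\perp$, whence $z\in Y\cap X^\perp=Y\wedge X^\perp$. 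Thus $Y\wedge X^\perp$ is nonempty, i.e.\ nonzero, and the lemma gives orthomodularity.

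For the converse, assume $L(O,\perp)$ is orthomodular, fix an orthoclosed $X$, and let $B$ be a basis of $X$. Since $B\subseteq X$ we have $Y:=B^\pp\le X$, and it suffices to exclude $Y<X$. If $Y<X$, the lemma produces a nonzero $W\le X\wedge Y^\perp$; using $Y^\perp=B^{\perp\perp\perp}=B^\perp$, any $w\in W$ satisfies $w\in X$ and $w\perp b$ for all $b\in B$. Irreflexivity of $\perp$ forces $w\notin B$, so $B\cup\{w\}$ is a pairwise orthogonal subset of $X$ strictly larger than $B$, contradicting the maximality of the basis $B$. Hence $Y=X$, that is $X=B^\pp$.

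The main obstacle I anticipate is not any one computation but keeping the correspondence exact on both sides: on the lattice side I must state and verify the weak characterization of orthomodularity in a form directly usable here (or cite it cleanly), and on the combinatorial side I must be sure that ``$Y\wedge X^\perp\ne 0$'' really hands me an honest element of $O$ that enlarges a basis. Both hinges are the irreflexivity of $\perp$ and the identity $B^\pp=X\Rightarrow B^\perp=X^\perp$, which is precisely what lets me move freely between a closed set and its bases; getting these two small facts right is what makes the two arguments mirror each other.
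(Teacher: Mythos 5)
Your proof is correct. Note that the paper itself offers no proof of this theorem --- it is stated with a citation to Dacey's thesis --- so there is no in-paper argument to compare against; your write-up is in effect a reconstruction of the classical proof, and it checks out at every hinge. The weak characterization of orthomodularity you isolate (for $X\le Y$, $Y\wedge X^\perp=0$ forces $X=Y$) is verified correctly: with $Z=X\vee(Y\wedge X^\perp)$ one indeed gets $Y\wedge Z^\perp\le (Y\wedge X^\perp)\wedge Z^\perp\le Z\wedge Z^\perp=0$, since $Z^\perp\le X^\perp$ and $Y\wedge X^\perp\le Z$. In the forward direction, the Zorn extension of a basis $B_X$ of $X$ to a basis $B_Y$ of $Y$ containing it is legitimate (pairwise orthogonal subsets of $Y$ containing $B_X$ form a poset closed under unions of chains), the case $B_Y=B_X$ is correctly excluded via Dacey's condition applied to both $X$ and $Y$, and any $z\in B_Y\setminus B_X$ lands in $Y\cap B_X^\perp=Y\cap X^\perp$ because $X=B_X^\pp$ gives $X^\perp=B_X^\perp$. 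In the converse, irreflexivity is exactly what guarantees that the element $w\in X\cap B^\perp$ is not already in $B$, so $B\cup\{w\}$ strictly enlarges $B$ and contradicts maximality. The only implicit point worth making explicit in a final write-up is that the meets you use are honest intersections of orthoclosed sets (so ``nonzero'' means ``nonempty''), which the paper's description of $L(O,\perp)$ supplies.
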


An orthogonality space satisfying the condition of \Cref{thm:dacey} is called a {\em
Dacey space}. A orthoclosed subset $X$ in an orthogonality space such that $X=B^\pp$,
for every basis $B$ of $X$ is called a {\em Dacey set}.

\begin{figure}
\begin{center}
\vcenteredhbox{\includegraphics{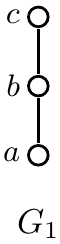}}
\hskip 1em
\vcenteredhbox{\includegraphics{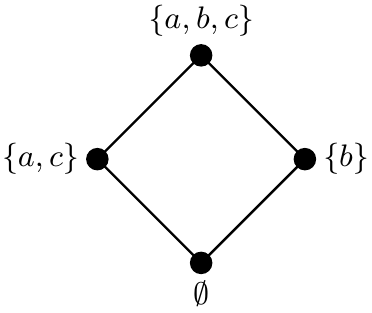}}
\hskip 2em
\vcenteredhbox{\includegraphics{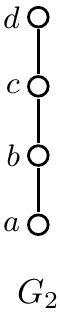}}
\hskip 1em
\vcenteredhbox{\includegraphics{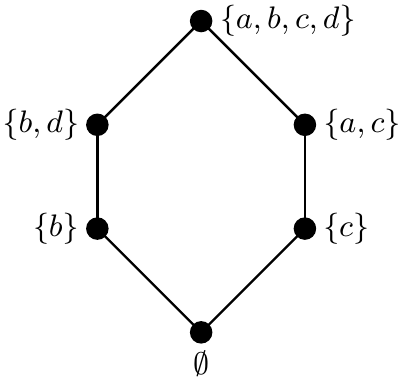}}
\caption{Two orthogonality spaces and their logics}
\label{fig:dacey}
\end{center}
\end{figure}

\begin{example}
Consider orthogonality spaces $G_1,G_2$ (\Cref{fig:dacey}). The logic
of $G_1$ is a 4-element Boolean algebra, hence an orthomodular lattice,
so $G_1$ is Dacey. The logic of $G_2$ is a hexagon, which is 
not an orthomodular lattice.
Therefore, by \Cref{thm:dacey}, $G_2$ is not Dacey. To see this directly,
consider the orthoclosed subset $\{b,d\}$. Then $\{b\}$ is a basis of
$\{b,d\}$.
However,
$$
\{b\}^\pp=\{a,c\}^\perp=\{b\}\neq\{b,c\},
$$
so $\{b,d\}$ is a non-Dacey orthoclosed set.
\end{example}
\begin{lemma}
\label{lemma:chardacey}
Let $X$ be an orthoclosed subset
an orthogonality space. The following are equivalent.
\begin{enumerate}[(a)]
\item $X$ is Dacey.
\item For every basis $B$ of $X$, $B^\perp=X^\perp$. 
\item For every basis $B$ of $X$, $B^\perp\subseteq X^\perp$.
\end{enumerate}
\end{lemma}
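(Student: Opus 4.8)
The plan is to prove the equivalences as a cycle $(a)\Rightarrow(b)\Rightarrow(c)\Rightarrow(a)$, exploiting the formal properties of the orthocomplementation operator recorded earlier: it is antitone, it satisfies $Y\subseteq Y^\pp$ and the identity $Y^\perp=Y^{\pp\perp}$, and $X\mapsto X^\pp$ is a closure operator. The single geometric observation that drives everything is that a basis $B$ of $X$ is by definition a subset of $X$, so $B\subseteq X$, whence antitonicity immediately yields $X^\perp\subseteq B^\perp$ for \emph{every} basis $B$, with no further hypothesis.

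For $(a)\Rightarrow(b)$, I would start from the defining equation $X=B^\pp$ of a Dacey set and simply apply the orthocomplementation: using $B^{\pp\perp}=B^\perp$ gives $X^\perp=(B^\pp)^\perp=B^\perp$, which is exactly $(b)$. The implication $(b)\Rightarrow(c)$ is then trivial, since an equality $B^\perp=X^\perp$ is in particular an inclusion $B^\perp\subseteq X^\perp$.

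The only step with any content is $(c)\Rightarrow(a)$, and even here the work is light. Given $(c)$, I would combine the hypothesis $B^\perp\subseteq X^\perp$ with the free inclusion $X^\perp\subseteq B^\perp$ coming from $B\subseteq X$ to conclude $B^\perp=X^\perp$. Applying the orthocomplementation once more yields $B^\pp=X^\pp$, and since $X$ is assumed orthoclosed we have $X^\pp=X$, so $X=B^\pp$; as $B$ was an arbitrary basis, this says precisely that $X$ is Dacey.

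There is no real obstacle here: the proof is a direct manipulation of $X\mapsto X^\perp$. The point worth emphasizing in the write-up is the asymmetry that makes condition $(c)$ already as strong as $(b)$ — namely that the inclusion $X^\perp\subseteq B^\perp$ holds automatically because every basis sits inside $X$, so only the reverse inclusion $B^\perp\subseteq X^\perp$ carries information. Ensuring the reader sees that $B\subseteq X$ is used is the one place where care is needed.
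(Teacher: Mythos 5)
Your proposal is correct and follows essentially the same route as the paper: (a)$\implies$(b) via $B^\perp=B^{\pp\perp}=X^\perp$, (b)$\implies$(c) trivially, and (c)$\implies$(a) using $B\subseteq X$ together with antitonicity and the orthoclosedness of $X$. The only cosmetic difference is that in (c)$\implies$(a) you first upgrade the inclusion to the equality $B^\perp=X^\perp$ and then apply $\perp$ once, whereas the paper derives the two inclusions $X\subseteq B^\pp$ and $B^\pp\subseteq X$ directly; these are the same facts in a slightly different order.
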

\begin{proof}~
\noindent\par(a)$\implies$(b): $B^\perp=B^{\pp\perp}=X^\perp$.
\noindent\par(b)$\implies$(c): Trivial.
\noindent\par(c)$\implies$(a): Since $B^\perp\subseteq X^\perp$, $X=X^\pp\subseteq B^\pp$. Since $B\subset X$, $B^\pp\subseteq X^\pp=X$. Since $X\subseteq B^\pp$ and
$B^\pp\subseteq X$, $B^\pp=X$ and $X$ is Dacey.
\end{proof}
\begin{lemma}
\label{lemma:unionofDaceys}
Let $X_1,X_2$ be two Dacey subsets of an orthogonality space $(O,\perp)$ with $X_1\perp X_2$. 
If $X_1\cup X_2$ is orthoclosed, then $X_1\cup X_2$ is Dacey.
\end{lemma}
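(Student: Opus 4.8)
The plan is to reduce everything to the characterization in \Cref{lemma:chardacey} and to analyze an arbitrary basis of $X:=X_1\cup X_2$ by splitting it along the two pieces. First I would record a preliminary observation: since $X_1\perp X_2$ and $\perp$ is irreflexive, no element can lie in both $X_1$ and $X_2$, so $X_1\cap X_2=\emptyset$. This disjointness is what makes the splitting clean. Note also that, being Dacey, $X_1$ and $X_2$ are orthoclosed, so talking about their bases makes sense.

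By \Cref{lemma:chardacey}, it suffices to show that every basis $B$ of $X$ satisfies $B^\perp=X^\perp$. Given such a $B$, I would set $B_1=B\cap X_1$ and $B_2=B\cap X_2$. Since $X_1$ and $X_2$ are disjoint and $B\subseteq X_1\cup X_2$, this is a partition $B=B_1\cup B_2$ into disjoint pieces.

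The crux is to verify that $B_i$ is a basis of $X_i$ for $i=1,2$. That $B_i$ is pairwise orthogonal and contained in $X_i$ is immediate, so the real content is maximality. Suppose $B_1$ were not maximal in $X_1$, witnessed by some $x\in X_1\setminus B_1$ orthogonal to every element of $B_1$. Because $x\in X_1$ and $B_2\subseteq X_2$, the hypothesis $X_1\perp X_2$ gives $x\perp B_2$; combined with $x\perp B_1$ this yields $x\perp B$. Moreover $x\notin B$, since $x\notin B_1$ and $x\in X_1$ forces $x\notin B_2\subseteq X_2$. Thus $B\cup\{x\}$ is a strictly larger pairwise orthogonal subset of $X$, contradicting maximality of $B$. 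The argument for $B_2$ is symmetric. This is precisely the step where the orthogonality hypothesis $X_1\perp X_2$ is indispensable, and I expect it to be the main obstacle to get exactly right.

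Finally, since $X_1$ and $X_2$ are Dacey, \Cref{lemma:chardacey} applied to each gives $B_1^\perp=X_1^\perp$ and $B_2^\perp=X_2^\perp$. Using the identity $\bigl(\bigcup_i Y_i\bigr)^\perp=\bigcap_i Y_i^\perp$ stated earlier, I would compute $B^\perp=(B_1\cup B_2)^\perp=B_1^\perp\cap B_2^\perp=X_1^\perp\cap X_2^\perp=(X_1\cup X_2)^\perp=X^\perp$. This is condition (b) of \Cref{lemma:chardacey} for $X$, so $X$ is Dacey, as required. The degenerate cases in which $X_1$ or $X_2$ is empty cause no trouble: the corresponding $B_i$ is then empty and the equality $B_i^\perp=X_i^\perp$ holds trivially, so the final computation goes through unchanged.
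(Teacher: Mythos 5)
Your proposal is correct and follows essentially the same route as the paper's own proof: split a basis $B$ of $X_1\cup X_2$ into $B\cap X_1$ and $B\cap X_2$, use $X_1\perp X_2$ to show each piece is a basis of the corresponding $X_i$, and conclude via \Cref{lemma:chardacey}~(b) and the identity $(B_1\cup B_2)^\perp=B_1^\perp\cap B_2^\perp$. Your explicit checks of the disjointness $X_1\cap X_2=\emptyset$, of $x\notin B$, and of the empty degenerate cases are details the paper leaves implicit, but they do not change the argument.
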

\begin{proof}
Let $B$ be a basis of $X_1\cup X_2$. We claim that
$B\cap X_1$ is a basis of $X_1$. Since $B$ is pairwise orthogonal,
$B\cap X_1$ is pairwise orthogonal. Suppose that $B\cap X_1$ is
not a basis of $X_1$, and let $x\in(X_1\setminus B)$ be such
that $x\perp (B\cap X_1)$. Then $X_1\perp X_2$ implies that $x\perp X_2\supseteq B\cap X_2$ and thus
$$
x\perp (B\cap X_1)\cup(B\cap X_2)=B\cap(X_1\cup X_2)=B.
$$
This contradicts the maximality of $B$ in $X_1\cup X_2$, hence $B\cap X_1$
is a basis of $X_1$. Similarly, $B\cap X_2$ is a basis of $X_2$.

Since $X_1$ and $X_2$ are Dacey, we may use \Cref{lemma:chardacey} (b) to compute
\begin{multline*}
B^\perp=\bigl(B\cap(X_1\cup X_2)\bigr)^\perp=
\bigl((B\cap X_1)\cup(B\cap X_2)\bigr)^\perp=\\
(B\cap X_1)^\perp\cap(B\cap X_2)^\perp=X_1^\perp\cap X_2^\perp=(X_1\cup X_2)^\perp
\end{multline*}
and by \Cref{lemma:chardacey} (b), we see that $X_1\cup X_2$ is Dacey.
\end{proof}

\section{Orthogonality spaces associated with posets}

In this section, we will associate to every poset $P$ an orthospace. For a finite
bounded poset, we will prove that this orthospace is Dacey if and only if the
poset is a lattice.

For a poset $P$, we write $\Q(P)$ for the set of all pairs $(a,b)\in P\times P$
with $a<b$. 
In lattice theory, the elements of $\Q(A)$ are called {\em proper quotients}.

An element $(a,b)\in\Q(P)$ is denoted by $[a<b]$. A quotient $(a,b)$ is usually
denoted by $b/a$ but $[a<b]$ will be more handy for our purposes. For
$[a<b],[c<d]\in\Q(P)$, we write $[a<b]\leq[c<d]$ if $c\leq a<b\leq d$. Note
that $(\Q(P),\leq)$ is a subposet of the poset $P^*\times P$, where $P^*$ is
the dual poset of $P$ and that $(\Q(P),\leq)$ is isomorphic to the poset of
non-singleton closed intervals in $P$, ordered by inclusion.

For $[a<b],[c<d]\in\Q(P)$ we write $[a<b]\perp[c<d]$ if $b\leq c$ or $d\leq a$,
see \Cref{fig:orthogonality}.
Clearly, $\perp$ is symmetric and irreflexive, so $(\Q(P),\perp)$ is an
orthogonality space. Note that $\emptyset$ is orthoclosed in $(\Q(P),\perp)$.

\begin{figure}[t]
\begin{center}
\includegraphics{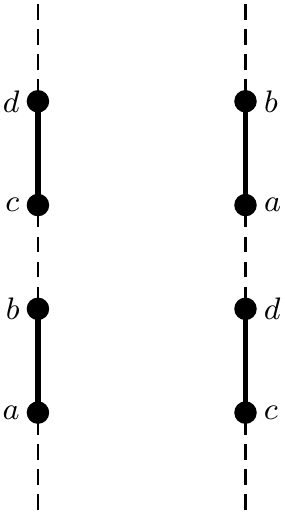}
\end{center}
\caption{The orthogonality relation in $\Q(P)$}
\label{fig:orthogonality}
\end{figure}

We equipped $\Q(P)$ with two relations: the orthogonality $\perp$ and the partial order
$\leq$. Various classes of posets equipped with an orthogonality relation were considered in
\cite{cattaneo1974abstract}. In the terminology of that paper, every $(\Q(P),\perp,\leq)$
is a {\em poset with a strong degenerate orthogonality}.

\begin{lemma}\label{lemma:orthogonaliscomparable}
Let $P$ be a poset, let 
$$
B=\{[a_1<a_2],\dots,[a_{2n-1}<a_{2n}]\}
$$ be a finite pairwise orthogonal subset of $\Q(P)$.
Then $\{a_1,a_2,\dots,a_{2n}\}$ are pairwise comparable.
\end{lemma}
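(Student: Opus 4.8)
The plan is to reduce the statement to a claim about pairs of quotients and then simply unpack the definition of $\perp$. The key observation is that, in order to show all of $a_1,\dots,a_{2n}$ are pairwise comparable, it suffices to show that for every two (not necessarily distinct) quotients $[a_{2i-1}<a_{2i}]$ and $[a_{2j-1}<a_{2j}]$ in $B$ their endpoints are pairwise comparable. Indeed, any two of the elements $a_1,\dots,a_{2n}$ are endpoints of at most two of the listed quotients, so once each pair of quotients is handled, comparability of all pairs follows. The trivial sub-case is that of a single quotient $[a_{2i-1}<a_{2i}]$, whose two endpoints are comparable by definition, since $a_{2i-1}<a_{2i}$.

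The substantive sub-case is that of two distinct quotients $[a_{2i-1}<a_{2i}]$ and $[a_{2j-1}<a_{2j}]$ with $i\neq j$. Since $B$ is pairwise orthogonal, $[a_{2i-1}<a_{2i}]\perp[a_{2j-1}<a_{2j}]$, which by the definition of $\perp$ means that either $a_{2i}\leq a_{2j-1}$ or $a_{2j}\leq a_{2i-1}$. In the first case, combining this inequality with $a_{2i-1}<a_{2i}$ and $a_{2j-1}<a_{2j}$ yields the chain $a_{2i-1}<a_{2i}\leq a_{2j-1}<a_{2j}$; in the second case one symmetrically obtains $a_{2j-1}<a_{2j}\leq a_{2i-1}<a_{2i}$. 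Either way, the four endpoints $a_{2i-1},a_{2i},a_{2j-1},a_{2j}$ lie on a single chain and are therefore pairwise comparable (note that equality such as $a_{2i}=a_{2j-1}$ is permitted and causes no difficulty).

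Finally I would assemble the conclusion: given arbitrary $a_k$ and $a_l$, they are endpoints either of a single quotient, handled by the trivial sub-case, or of two distinct quotients, handled by the orthogonality sub-case; in both situations $a_k$ and $a_l$ are comparable. I do not expect a genuine obstacle here, since the entire content is the short case distinction extracted directly from the definition of $\perp$. The only point demanding a little care is to remember to treat separately the situation in which the two chosen elements happen to be the endpoints of the same quotient, so that the reduction to pairs of quotients is genuinely exhaustive.
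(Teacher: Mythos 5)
Your proof is correct and takes essentially the same approach as the paper, whose entire proof consists of the observation that $[a<b]\perp[c<d]$ forces $\{a,b,c,d\}$ to be pairwise comparable, plus the remark that ``the rest follows by an easy induction.'' Your direct reduction to pairs of quotients simply spells out that bookkeeping explicitly (and in fact shows the induction is unnecessary, since comparability is a pairwise property).
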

\begin{proof}
Note that whenever $[a<b]\perp[c<d]$, the elements in the set $\{a,b,c,d\}$ are
pairwise comparable. The rest follows by an easy induction.
\end{proof}

Observe that $[u<v]\leq [a<b]\perp[c<d]$ implies that $[u<v]\perp[c<d]$.

\begin{lemma}
\label{lemma:orthoclosedisdownset}
Let $P$ be a poset. Every orthoclosed subset of $(\Q(P),\perp)$ is a lower set in
$(\Q(P),\leq)$.
\end{lemma}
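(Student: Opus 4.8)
The plan is to exploit the defining equation $X=X^\pp$ of an orthoclosed set together with the monotonicity observation stated immediately above the lemma, namely that $[u<v]\leq[a<b]\perp[c<d]$ implies $[u<v]\perp[c<d]$. Concretely, I would fix $[a<b]\in X$ and an arbitrary $[u<v]\in\Q(P)$ with $[u<v]\leq[a<b]$, and aim to show $[u<v]\in X$. Since $X$ is orthoclosed, $X=X^\pp$, so by the characterization of the closure it suffices to prove that $[u<v]\in X^\pp$; and membership in $X^\pp$ is tested by checking that $[u<v]$ is orthogonal to every element of $X^\perp$.

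The core of the argument is a single application of the observation. Let $[c<d]\in X^\perp$ be arbitrary. Since $[a<b]\in X$, the element $[c<d]$ is orthogonal to $[a<b]$, equivalently $[a<b]\perp[c<d]$ by symmetry. Feeding the chain $[u<v]\leq[a<b]\perp[c<d]$ into the observation yields $[u<v]\perp[c<d]$. As $[c<d]$ ranges over all of $X^\perp$, this says precisely that $[u<v]\perp X^\perp$, that is $[u<v]\in X^\pp=X$. This establishes that $X$ is closed downward under $\leq$, as required.

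There is no genuine obstacle here: once the monotonicity observation is in hand, the lemma is essentially immediate, and the observation itself is verified by the routine case split on the two disjuncts $b\leq c$ and $d\leq a$ in the definition of $\perp$ (in the first case $v\leq b\leq c$, in the second $d\leq a\leq u$). The only point requiring a little care is to invoke the correct characterization of $X^\pp$ — testing $[u<v]$ against all elements of $X^\perp$ — rather than attempting to describe $X^\perp$ explicitly, which would be unnecessary and more cumbersome.
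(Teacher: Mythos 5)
Your proposal is correct and follows exactly the paper's argument: test $[u<v]$ against an arbitrary element of $X^\perp$, note it is orthogonal to $[a<b]\in X$, and apply the monotonicity observation $[u<v]\leq[a<b]\perp[c<d]\implies[u<v]\perp[c<d]$ stated just before the lemma to conclude $[u<v]\in X^\pp=X$. Your additional verification of that observation by the case split on $b\leq c$ versus $d\leq a$ is a harmless elaboration of what the paper leaves implicit.
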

\begin{proof}
Let $X$ be an orthoclosed subset of $(\Q(P),\perp)$.
Let $[a<b]\in X$ and $[u<v]\leq[a<b]$. We need to prove that $[u<v]\in X=X^\pp$,
that means, for all $[c<d]\in\Q(P)$, $[c<d]\perp X$ implies $[c<d]\perp[u<v]$. However,
this is easy: if $[c<d]\perp X$, then $[c<d]\perp[a<b]$ and since $[u<v]\leq[a<b]$,
$[c<d]\perp[u<v]$.
\end{proof}

Let $U$ be a subset of a poset $P$. We denote
\begin{align*}
\tau(U)=\{[a<b]\in\Q(P)\mid b\in U\}\\
\beta(U)=\{[a<b]\in\Q(P)\mid a\in U\}.
\end{align*}
Note that, for all $[x<y]\in\Q(P)$,
\begin{align*}
[x<y]^\perp&=\tau(x^\downarrow)\cup\beta(y^\uparrow)\\
[x<y]^\downarrow&=\beta(x^\uparrow)\cap\tau(y^\downarrow).
\end{align*}

\begin{lemma}
\label{lemma:derp}~
\begin{enumerate}[(a)]
\item For every lower set $I$ of a lower-bounded poset $P$, $\tau(I)^\perp=\beta(I^\uparrow)$.
\item For every upper set $F$ of a upper-bounded poset $P$, $\beta(F)^\perp=\tau(F^\downarrow)$.
\end{enumerate}
\end{lemma}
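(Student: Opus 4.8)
The plan is to prove (a) directly and then obtain (b) for free by duality. Passing from $P$ to its dual $P^*$ induces a bijection $\Q(P)\to\Q(P^*)$ that keeps the two endpoints of each quotient but reverses which one is the top; a direct check shows this bijection is an isomorphism of orthogonality spaces, since the two defining alternatives $b\leq c$ and $d\leq a$ for $[a<b]\perp[c<d]$ simply trade places under order reversal. This same bijection interchanges $\tau$ with $\beta$ and $\uparrow$ with $\downarrow$, and it sends lower sets to upper sets and lower-boundedness to upper-boundedness. Consequently (b) is nothing but (a) read off in $P^*$, and it suffices to establish (a).

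For (a), I would prove the two inclusions separately. The inclusion $\beta(I^\uparrow)\subseteq\tau(I)^\perp$ is the routine one: if $[c<d]\in\beta(I^\uparrow)$, then $c$ is an upper bound of $I$, so for every $[a<b]\in\tau(I)$ we have $b\in I$ and hence $b\leq c$; by the definition of $\perp$ this yields $[c<d]\perp[a<b]$, whence $[c<d]\in\tau(I)^\perp$. This direction uses neither that $I$ is a lower set nor that $P$ is bounded.

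For the reverse inclusion $\tau(I)^\perp\subseteq\beta(I^\uparrow)$, I would first dispose of the degenerate case $I=\emptyset$: here $\tau(\emptyset)=\emptyset$ gives $\tau(I)^\perp=\Q(P)$, while $\emptyset^\uparrow=P$ gives $\beta(I^\uparrow)=\beta(P)=\Q(P)$, so the two sides coincide. If $I\neq\emptyset$, I take $[c<d]\in\tau(I)^\perp$ and must show $c\in I^\uparrow$, that is, $i\leq c$ for every $i\in I$. For $i=0$ this is automatic. For $i>0$ the least element supplies the quotient $[0<i]$, which lies in $\tau(I)$ because its top $i$ belongs to $I$; orthogonality $[c<d]\perp[0<i]$ then forces $d\leq 0$ or $i\leq c$. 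Since $c\geq 0$ and $c<d$, we have $d>0$, so $d\leq 0$ is impossible and $i\leq c$ remains.

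The only real obstacle is this reverse inclusion, and its crux is manufacturing a witnessing quotient with a prescribed top $i\in I$. This is exactly where lower-boundedness is used: it guarantees a bottom element $0<i$ from which to build $[0<i]$, and it simultaneously kills the unwanted alternative $d\leq 0$. Everything else is bookkeeping with the definitions of $\tau$, $\beta$, and $\perp$.
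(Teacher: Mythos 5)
Your proposal is correct and matches the paper's proof in all essentials: the same two-inclusion argument for (a), with the same key witness $[0<i]\in\tau(I)$ (using the bottom element both to build the quotient and to rule out the alternative $d\leq 0$), and (b) obtained by duality, which the paper dispatches with a single word ``Dually'' where you spell out the order-reversing isomorphism $\Q(P)\simeq\Q(P^*)$. Your explicit duality check and the separate (vacuously true, hence unnecessary) case $I=\emptyset$ are the only additions.
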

\begin{proof}

\begin{enumerate}[(a)]
\item Let us prove that $\tau(I)^\perp\subseteq\beta(I^\uparrow)$.
Let $[x<y]\in\tau(I)^\perp$. We need to prove that 
$[x<y]\in\beta(I^\uparrow)$, that means that, for all $b\in I$, $b\leq x$. If $b=0$, there is nothing to prove,
so let us assume that $0<b$. Then $[0<b]\in\tau(I)$, so $[0<b]\perp[x<y]$ and this clearly
implies $b\leq x$.

For the opposite inclusion, let $[x<y]\in\beta(I^\uparrow)$, meaning that $x\in
I^\uparrow$. For all $[a<b]\in\tau(I)$, $b\in I$ and thus $b\leq x$. Therefore,
$[a<b]\perp[x<y]$ and hence $[x<y]\in\tau(I)^\perp$.

\item Dually.
\end{enumerate}
\end{proof}
\begin{corollary}
\label{coro:tauclosure}
For every lower set $I$ of a bounded poset $P$,
$\tau(I)^\pp = \tau(I^\ud)$
\end{corollary}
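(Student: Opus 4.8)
The plan is to read off the identity by composing the two halves of \Cref{lemma:derp}, unfolding the double orthocomplement as $\tau(I)^\pp=(\tau(I)^\perp)^\perp$ and evaluating each application of $\perp$ in turn.

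First I would apply part (a). Since $P$ is bounded it is in particular lower bounded, and $I$ is a lower set by hypothesis, so \Cref{lemma:derp}(a) applies directly and gives $\tau(I)^\perp=\beta(I^\uparrow)$. Next I would apply part (b) to the set $F=I^\uparrow$. The only point that genuinely requires an argument is checking that the hypotheses of (b) hold for this choice of $F$: namely that $I^\uparrow$ is an upper set and that $P$ is upper bounded. The second is immediate from boundedness. For the first I would note that the set of upper bounds of any subset is automatically upward closed: if $x\in I^\uparrow$ and $x\leq z$, then every element of $I$ lies below $x$, hence below $z$, so $z\in I^\uparrow$. With this verified, \Cref{lemma:derp}(b) yields $\beta(I^\uparrow)^\perp=\tau\bigl((I^\uparrow)^\downarrow\bigr)=\tau(I^\ud)$.

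Chaining the two equalities gives $\tau(I)^\pp=\beta(I^\uparrow)^\perp=\tau(I^\ud)$, which is the claim. I do not anticipate any real obstacle here: the statement is a formal consequence of the preceding lemma, and the only substantive step is the (routine) observation that $I^\uparrow$ is an upper set, which is exactly what makes the transition from part (a) to part (b) legitimate.
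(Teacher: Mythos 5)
Your proposal is correct and matches the paper's own proof, which applies \Cref{lemma:derp}(a) and then (b) in exactly this chain $\tau(I)^\pp=\beta(I^\uparrow)^\perp=\tau(I^\ud)$. The only difference is that you spell out the (routine) verification that $I^\uparrow$ is an upper set, which the paper leaves implicit.
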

\begin{proof}
Using (a) and (b) of \Cref{lemma:derp},
$$
\tau(I)^\pp=\beta(I^\uparrow)^\perp=\tau(I^\ud)
$$
\end{proof}

\begin{corollary}
\label{coro:doubleperpisdown}
Let $P$ be a poset.
For every $[x<y]\in\Q(P)$, 
$$
[x<y]^\pp=[x<y]^\downarrow
$$
\end{corollary}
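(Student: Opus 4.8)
The plan is to read off the identity from the two formulas displayed immediately before the statement, computing $[x<y]^{\perp\perp}$ in one pass rather than proving two inclusions. Starting from $[x<y]^\perp=\tau(x^\downarrow)\cup\beta(y^\uparrow)$ and using that orthocomplementation sends unions to intersections of orthocomplements, I would write
$$
[x<y]^{\perp\perp}=\bigl(\tau(x^\downarrow)\cup\beta(y^\uparrow)\bigr)^\perp=\tau(x^\downarrow)^\perp\cap\beta(y^\uparrow)^\perp.
$$
Because $x^\downarrow$ is a lower set and $y^\uparrow$ is an upper set, \Cref{lemma:derp}(a) and (b) apply directly and yield $\tau(x^\downarrow)^\perp=\beta\bigl((x^\downarrow)^\uparrow\bigr)$ and $\beta(y^\uparrow)^\perp=\tau\bigl((y^\uparrow)^\downarrow\bigr)$.

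Next I would simplify the two iterated bound operators. A one-line check gives $(x^\downarrow)^\uparrow=x^\uparrow$ — every upper bound of $x^\downarrow$ is in particular an upper bound of $x$, since $x\in x^\downarrow$, and conversely any upper bound of $x$ bounds all of $x^\downarrow$ from above — and dually $(y^\uparrow)^\downarrow=y^\downarrow$. Substituting, $[x<y]^{\perp\perp}=\beta(x^\uparrow)\cap\tau(y^\downarrow)$, which is precisely the formula for $[x<y]^\downarrow$ recorded above the statement; this finishes the argument.

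The step to watch is the appeal to \Cref{lemma:derp}: its part (a) requires $P$ to be lower bounded and part (b) requires $P$ to be upper bounded, so the computation as written needs $P$ bounded, and I would state the corollary accordingly (matching \Cref{coro:tauclosure}). This hypothesis is not cosmetic. For the three-element poset $P=\{a,b,c\}$ with $a<b$, $a<c$ and $b,c$ incomparable, $\Q(P)=\{[a<b],[a<c]\}$ carries the empty orthogonality, so $[a<b]^{\perp\perp}=\Q(P)=\{[a<b],[a<c]\}$ whereas $[a<b]^\downarrow=\{[a<b]\}$. By contrast, the inclusion $[x<y]^\downarrow\subseteq[x<y]^{\perp\perp}$ holds for every poset: it is immediate from $[x<y]\in[x<y]^{\perp\perp}$ together with \Cref{lemma:orthoclosedisdownset}, which makes $[x<y]^{\perp\perp}$ a lower set. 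Thus boundedness is needed only for the reverse inclusion, which is where the real content sits.
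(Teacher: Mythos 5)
Your computation is, line for line, the paper's own proof: the paper likewise expands
$[x<y]^\pp=\bigl(\tau(x^\downarrow)\cup\beta(y^\uparrow)\bigr)^\perp
=\tau(x^\downarrow)^\perp\cap\beta(y^\uparrow)^\perp
=\beta(x^\du)\cap\tau(y^\ud)
=\beta(x^\uparrow)\cap\tau(y^\downarrow)=[x<y]^\downarrow$,
using \Cref{lemma:derp} and the same absorption identities $x^\du=x^\uparrow$, $y^\ud=y^\downarrow$ that you verify explicitly. So the mathematical route is identical, and your version is if anything more careful, since it spells out the absorption step the paper leaves silent.

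Your remark on hypotheses is a genuine and correct catch, not a cosmetic one. The paper states the corollary for an arbitrary poset, yet both its proof and yours rest on \Cref{lemma:derp}, whose part (a) needs $P$ lower bounded and part (b) needs $P$ upper bounded; so the argument as written proves the corollary only for bounded $P$. Your counterexample is valid: for $P=\{a,b,c\}$ with $a<b$, $a<c$ and $b,c$ incomparable, the relation $\perp$ on $\Q(P)=\{[a<b],[a<c]\}$ is empty, so $\{[a<b]\}^\pp=\emptyset^\perp=\Q(P)$, while $[a<b]^\downarrow=\{[a<b]\}$ because $c\not\leq b$; hence the statement as printed fails without boundedness (concretely, it is \Cref{lemma:derp}(b) that breaks there, since $\beta(b^\uparrow)=\emptyset$ has $\emptyset^\perp=\Q(P)\neq\tau(b^{\ud})$). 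Your observation that the inclusion $[x<y]^\downarrow\subseteq[x<y]^\pp$ holds unconditionally, via $[x<y]\in[x<y]^\pp$ and \Cref{lemma:orthoclosedisdownset}, is also right, and correctly isolates where boundedness is actually used. Note that the corrected hypothesis should propagate to \Cref{lemma:downisdacey}, which cites this corollary under the same too-weak assumption; this is harmless for the paper's main results, since \Cref{thm:main} and everything downstream assume $P$ finite and bounded anyway.
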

\begin{proof}
By \Cref{coro:tauclosure},
\begin{align*}
[x<y]^\pp=&(\tau(x^\downarrow)\cup\beta(y^\uparrow))^\perp=
(\tau(x^\downarrow)^\perp)\cap(\beta(y^\uparrow)^\perp)=\\
&(\beta(x^\du)\cap\tau(y^\ud))=
\beta(x^\uparrow)\cap\tau(y^\downarrow)=[x<y]^\downarrow
\end{align*}
\end{proof}
\begin{lemma}
\label{lemma:downisdacey}
Let $P$ be a poset.
For every $[x<y]\in\Q(P)$, $[x<y]^{\downarrow}$ is Dacey.
\end{lemma}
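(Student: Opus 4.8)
The plan is to apply the characterisation of Dacey sets from \Cref{lemma:chardacey}. By \Cref{coro:doubleperpisdown} we have $[x<y]^\downarrow=[x<y]^\pp$, so $[x<y]^\downarrow$ is orthoclosed, and since $S^\perp=S^{\pp\perp}$ for every $S$, its orthocomplement is $([x<y]^\downarrow)^\perp=[x<y]^{\pp\perp}=[x<y]^\perp=\tau(x^\downarrow)\cup\beta(y^\uparrow)$. Thus, by \Cref{lemma:chardacey}(c), it suffices to show that for every basis $B$ of $[x<y]^\downarrow$ we have $B^\perp\subseteq[x<y]^\perp$; concretely, that whenever $[u<v]\perp B$ then $[u<v]\perp[x<y]$, that is, $v\le x$ or $y\le u$.

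The key is to understand the shape of a basis $B$. By \Cref{lemma:orthogonaliscomparable} the endpoints occurring in the elements of $B$ form a chain $C$, and since every element of $B$ lies in $[x<y]^\downarrow$ we have $x\le c\le y$ for every $c\in C$. First I would show, using the maximality of $B$, that $\min C=x$ and $\max C=y$: if the least endpoint $m$ satisfied $x<m$, then $[x<m]$ would be orthogonal to every element of $B$ (each left endpoint is $\ge m$) and would lie in $[x<y]^\downarrow\setminus B$, contradicting maximality; the claim $\max C=y$ is dual. In particular $B$ contains an element $[x<b_*]$ and an element $[a^*<y]$.

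Now fix $[u<v]\perp B$ and suppose, for contradiction, that $v\not\le x$ and $y\not\le u$. Each $[a<b]\in B$ satisfies $v\le a$ or $b\le u$, and $u<v$ rules out both, so $B=L\sqcup H$ splits into the elements whose top is $\le u$ and those whose bottom is $\ge v$. Applying $[u<v]\perp[x<b_*]$ gives $b_*\le u$ (since $v\not\le x$), so $L\neq\emptyset$; applying $[u<v]\perp[a^*<y]$ gives $v\le a^*$ (since $y\not\le u$), so $H\neq\emptyset$. Let $b_{\max}$ be the largest top occurring in $L$ and $a_{\min}$ the smallest bottom occurring in $H$. Then $b_{\max}\le u<v\le a_{\min}$, so $[b_{\max}<a_{\min}]$ is a genuine quotient; it lies in $[x<y]^\downarrow$ because $x\le b_{\max}<a_{\min}\le y$; it is orthogonal to every element of $B$ (for $[a<b]\in L$ one has $b\le b_{\max}$, and for $[a<b]\in H$ one has $a_{\min}\le a$); and it is not in $B$, since its top is $\ge v>u$ and its bottom is $\le u<v$. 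This contradicts the maximality of $B$, so $v\le x$ or $y\le u$, as required.

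The main obstacle is exactly the structural analysis of bases: the argument manufactures the forbidden quotient $[b_{\max}<a_{\min}]$, and for this I need the extremal endpoints $\min C$, $\max C$, $b_{\max}$, $a_{\min}$ to exist. This is immediate when $[x,y]$ satisfies a descending chain condition, in particular for finite $P$; so the delicate point, and the place where the finiteness hypothesis of the surrounding theory enters, is the passage from ``pairwise orthogonal family'' to ``family with well-defined top and bottom links.'' I would therefore carry out the extremal-endpoint step first and treat it as the technical heart of the proof.
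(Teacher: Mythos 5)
Your proof is correct in the setting where its extremal elements exist (finite $P$, or more generally finite bases), and it uses the same skeleton as the paper: orthoclosedness via \Cref{coro:doubleperpisdown}, the criterion \Cref{lemma:chardacey}(c), and \Cref{lemma:orthogonaliscomparable} to linearize the basis. The endgame is organized differently, though. The paper pins down the full structure of a basis up front: writing $B=\{[c_1<d_1],\dots,[c_k<d_k]\}$ with $d_i\leq c_{i+1}$, maximality forces not only $c_1=x$ and $d_k=y$ (your extremal-endpoint step) but also $d_i=c_{i+1}$ for every $i$, so $B$ is a gapless chain of consecutive quotients from $x$ to $y$; a one-line induction then shows that $[p<q]\in B^\perp$ implies $q\leq x$ or $y\leq p$. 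You instead establish only the two extreme links $[x<b_*]$ and $[a^*<y]$, and for each offending $[u<v]$ you manufacture the single bridging quotient $[b_{\max}<a_{\min}]$ contradicting maximality. The mechanism is identical --- your bridge is exactly the paper's gap-filler $[d_i<c_{i+1}]$, localized at $[u,v]$ --- but the trade-off is real: the paper proves gaplessness everywhere once and then needs no case analysis on $[u<v]$, while you avoid the global structure theorem at the cost of a per-element contradiction argument. All of your individual steps check out (the partition $B=L\sqcup H$ is indeed disjoint since $v\leq a$ and $b\leq u$ together would give $v\leq a<b\leq u$, and $[b_{\max}<a_{\min}]$ lies in $[x<y]^\downarrow$, is orthogonal to all of $B$, and is in neither $L$ nor $H$).

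On the point you flag as delicate: you are right that finiteness is genuinely needed, and in fact the paper's own proof assumes it silently by listing $B$ as a finite set --- harmless for the paper's purposes, since the lemma is only invoked through \Cref{lemma:chaintypeisdacey} and \Cref{thm:main}, where $P$ is finite. However, your hedge that a descending chain condition on $[x,y]$ suffices is too weak: the dual steps ($\max C=y$ and the existence of $b_{\max}$) require the ascending chain condition as well, and the lemma actually fails with DCC alone. Take $x=a_0<a_1<a_2<\cdots$ with $a_n<y$ for all $n$, and adjoin $u,v$ with $a_n<u<v$ for all $n$ and $u,v$ incomparable to $y$; the interval $[x,y]$ satisfies DCC, the set $B=\{[a_n<a_{n+1}]\mid n\geq 0\}$ is a basis of $[x<y]^\downarrow$ (any quotient $[a_m<a_n]$ with $n>m+1$, or $[a_m<y]$, fails to be orthogonal to $[a_m<a_{m+1}]$), yet $[u<v]\in B^\perp$ while $[u<v]\notin[x<y]^\perp$ because $v\not\leq x$ and $y\not\leq u$. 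So the honest scope of both your argument and the paper's is that pairwise orthogonal subsets of $[x<y]^\downarrow$ are finite, e.g.\ when all chains in $[x,y]$ are finite; your instinct to treat this as the technical heart of the proof was sound, but the condition you name should be both chain conditions, not DCC alone.
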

\begin{proof}
By \Cref{coro:doubleperpisdown}, $[x<y]^{\downarrow}$ is orthoclosed.

Let $B$ be a basis of $[x<y]^{\downarrow}$.
Since $B$ is pairwise orthogonal, we may write
$$
B=\{[c_1<d_1],\dots,[c_k<d_k]\}
$$
and by \Cref{lemma:orthogonaliscomparable} we may assume without loss of generality that $d_i\leq c_{i+1}$, for
$i\in\{1,\dots,k-1\}$.  If $x<c_1$, then $[x<c_1]\perp B$ and $B$ is not
maximal pairwise orthogonal, so $x=c_1$. Similarly, $d_k=y$ and, for all
$i\in\{1,\dots,k-1\}$, $d_i=c_{i+1}$.  It is now obvious that $[p<q]\in
B^\perp$ implies either $q\leq c_1=x$ or $y=d_k\leq p$, so $[p<q]\in
[x<y]^{\perp}=([x<y]^\downarrow)^\perp$.  Therefore, $B^\perp\subseteq
([x<y]^\downarrow)^\perp$ and by \Cref{lemma:chardacey} (c), this implies that
$[x<y]^\downarrow$ is Dacey.

\end{proof}

\begin{lemma}
\label{lemma:emma}
Let $P$ be a lattice and let $X$ be an orthoclosed subset
of $\Q(P)$. Let $[a<b],[c<d]\in X$ be such that 
$[a<b]\not\perp[c<d]$. Then
\begin{enumerate}[(a)]
\item $\{[a<b],[c<d]\}^\perp=\beta((b\vee d)^\uparrow)\cup\tau((a\wedge
c)^\downarrow)$
\item $[a\wedge c<b \vee d]\in X$
\end{enumerate}
\end{lemma}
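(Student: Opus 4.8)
The plan is to establish (a) by a direct computation of the intersection and then obtain (b) as a formal consequence of (a). For (a), I would begin from $\{[a<b],[c<d]\}^\perp=[a<b]^\perp\cap[c<d]^\perp$ and substitute the single-element formula $[x<y]^\perp=\tau(x^\downarrow)\cup\beta(y^\uparrow)$ recorded just before \Cref{lemma:derp}. Distributing the intersection over the two unions yields four terms. Two of them collapse immediately using the lattice identities $a^\downarrow\cap c^\downarrow=(a\wedge c)^\downarrow$ and $b^\uparrow\cap d^\uparrow=(b\vee d)^\uparrow$, together with the evident facts $\tau(U)\cap\tau(W)=\tau(U\cap W)$ and $\beta(U)\cap\beta(W)=\beta(U\cap W)$; these produce exactly $\tau((a\wedge c)^\downarrow)$ and $\beta((b\vee d)^\uparrow)$, the two pieces of the claimed formula.

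The crux is to show the two remaining cross terms vanish, and this is the only place the hypothesis $[a<b]\not\perp[c<d]$ is used. A pair $[p<q]$ lies in $\tau(a^\downarrow)\cap\beta(d^\uparrow)$ precisely when $d\leq p<q\leq a$, which forces $d\leq a$; symmetrically $\beta(b^\uparrow)\cap\tau(c^\downarrow)$ is nonempty only if $b\leq c$. But $[a<b]\not\perp[c<d]$ means exactly $b\not\leq c$ and $d\not\leq a$, so both cross terms are empty. I expect this to be the main (and essentially the sole) obstacle; it is the entire point of the non-orthogonality assumption, and once it is disposed of, (a) is proved.

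For (b), I would avoid reasoning about $X$ directly and instead leverage (a). First note $a\wedge c\leq a<b\leq b\vee d$, so $[a\wedge c<b\vee d]$ is a genuine element of $\Q(P)$, and by the single-element formula its orthocomplement is $[a\wedge c<b\vee d]^\perp=\tau((a\wedge c)^\downarrow)\cup\beta((b\vee d)^\uparrow)$, which by (a) equals $\{[a<b],[c<d]\}^\perp$. Since $[a<b],[c<d]\in X$, antitonicity of $\perp$ gives $X^\perp\subseteq\{[a<b],[c<d]\}^\perp=[a\wedge c<b\vee d]^\perp$. Applying $\perp$ once more (again antitone) and using that $X$ is orthoclosed yields $[a\wedge c<b\vee d]^\pp\subseteq X^\pp=X$. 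Finally, $[a\wedge c<b\vee d]$ lies in its own double complement (indeed $[a\wedge c<b\vee d]^\pp=[a\wedge c<b\vee d]^\downarrow$ by \Cref{coro:doubleperpisdown}), so $[a\wedge c<b\vee d]\in X$, as required.
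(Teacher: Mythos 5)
Your proposal is correct and takes essentially the same approach as the paper: your distribution of $[a<b]^\perp\cap[c<d]^\perp$ into four terms is the set-algebraic form of the paper's element-wise case analysis (the two collapsing terms are its cases $b\leq x,\ d\leq x$ and $y\leq a,\ y\leq c$, and the two empty cross terms are exactly where the paper invokes $[a<b]\not\perp[c<d]$ to derive a contradiction). Part (b) likewise matches the paper's argument, identifying the right-hand side of (a) with $[a\wedge c<b\vee d]^\perp$ and concluding via double orthocomplementation and the orthoclosedness of $X$; your explicit check that $a\wedge c<b\vee d$ is a small point the paper leaves implicit.
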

\begin{proof}~

\begin{enumerate}[(a)]
\item
The inclusion $\{[a<b],[c<d]\}^\perp\supseteq\beta((b\vee d)^\uparrow)\cup\tau((a\wedge
c)^\downarrow)$ is clear, let us prove the other one.
Let $[x<y]\in\{[a<b],[c<d]\}^\perp$.
Since $[x<y]\perp[a<b]$, either $b\leq x$ or $y\leq a$.

Suppose that $b\leq x$. Since $[x<y]\perp[c<d]$, either $y\leq c$ or
$d\leq x$. If $y\leq c$ then
$a<b\leq x<y\leq c<d$, which contradicts $[a<b]\not\perp[c<d]$. Therefore, $d\leq
x$, so $b\vee d\leq x$ meaning that $[x<y]\in\beta((b\vee d)^\uparrow)$.

Similarly $y\leq a$ implies that $y\leq a\wedge c$, hence
$[x<y]\in\tau((a\wedge c)^\downarrow)$.

\item 
Clearly, 
$$
\beta((b\vee d)^\uparrow)\cup\tau((a\wedge c)^\downarrow)=[a\wedge c<b\vee d]
^\perp.
$$
Therefore by (a), $[x<y]\in\{[a<b],[c<d]\}^\perp$ implies that 
$[x<y]\perp[a\wedge c<b\vee d]$, so
$$
[a\wedge c<b\vee d]\in\{[a<b],[c<d]\}^\pp\subseteq X^\pp=X.
$$
\end{enumerate}
\end{proof}
\begin{lemma}
\label{lemma:mergetouching}
Let $P$ be a poset, let $X$ be an orthoclosed subset of $\Q(P)$. If $[p<q_2],[q_1<r]\in X$
and $q_1\leq q_2$ and $p<r$, then $[p<r]\in X$.
\end{lemma}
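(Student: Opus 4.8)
The plan is to exploit that $X$ is orthoclosed, so $X=X^{\pp}$, and reduce the membership $[p<r]\in X$ to the single condition that every quotient orthogonal to all of $X$ is also orthogonal to $[p<r]$. Concretely, I would first check that $[p<r]\in\Q(P)$, which is immediate from the hypothesis $p<r$. Then, since $X=X^{\pp}$, it suffices to show $[p<r]\in X^{\pp}$; unwinding the definition of $X^{\pp}$, this means that for every $[c<d]\in\Q(P)$ with $[c<d]\perp X$ we must verify $[c<d]\perp[p<r]$.

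So I would fix an arbitrary $[c<d]\in X^{\perp}$ and translate everything into order inequalities using the definition of $\perp$. From $[p<q_2]\in X$ we get $[c<d]\perp[p<q_2]$, i.e.\ $q_2\leq c$ or $d\leq p$; from $[q_1<r]\in X$ we get $[c<d]\perp[q_1<r]$, i.e.\ $r\leq c$ or $d\leq q_1$; and the goal $[c<d]\perp[p<r]$ unfolds to $r\leq c$ or $d\leq p$. The proof is then a short case split on the first disjunction. If $d\leq p$, the goal's second disjunct holds and we are done. Otherwise we must be in the case $q_2\leq c$, and we split on the second disjunction: if $r\leq c$ we are done directly, and the remaining subcase $d\leq q_1$ is where the hypotheses $q_1\leq q_2$ must be used.

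The crux of the argument, and the one place where the precise hypotheses matter, is ruling out that last subcase. The point is that $d\leq q_1$ together with $q_1\leq q_2$ and $q_2\leq c$ yields the chain $d\leq q_1\leq q_2\leq c$, hence $d\leq c$; but $[c<d]\in\Q(P)$ forces $c<d$, a contradiction. This closes the only problematic branch and leaves $r\leq c$, completing the verification that $[c<d]\perp[p<r]$. I do not anticipate any genuine obstacle here beyond keeping the orientation of the orthogonality relation straight (which disjunct corresponds to ``$b\leq c$'' versus ``$d\leq a$''); the hypothesis $q_1\leq q_2$ is exactly what is needed to splice the two chains together, and once the contradiction $c<d\leq c$ is spotted the case analysis resolves mechanically. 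Since $[c<d]\in X^{\perp}$ was arbitrary, we conclude $[p<r]\in X^{\pp}=X$.
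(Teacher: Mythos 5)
Your proof is correct and follows essentially the same route as the paper's: both reduce membership to $[p<r]\in X^{\pp}=X$, take an arbitrary quotient orthogonal to $[p<q_2]$ and $[q_1<r]$, and rule out the only problematic branch by splicing $d\leq q_1\leq q_2\leq c$ against the strictness $c<d$ (the paper phrases the same contradiction as $q_2\leq x<y\leq q_1$ versus $q_1\leq q_2$). The only cosmetic difference is that the paper works with $\{[p<q_2],[q_1<r]\}^\pp\subseteq X^\pp$ while you argue directly from $[c<d]\in X^{\perp}$, which is equivalent.
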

\begin{proof}
Let $[x<y]\in\Q(P)$ be such that $[x<y]\in\{[p<q_2],[q_1<r]\}^\perp$. We shall prove that
this implies $y\leq p$ or $r\leq x$. Indeed, assume that $y\not\leq p$ and $r\not\leq x$.
Then $[x<y]\perp[p<q_2]$ implies $q_2\leq x$ and $[x<y]\perp[q_1<r]$ implies $y\leq q_1$.
Hence $q_2\leq x<y\leq q_1$ and this contradicts $q_1\leq q_2$.

We have proved that $y\leq p$ or $r\leq x$ and 
each of these implies $[x<y]\perp[p<r]$. Therefore,
$$
[p<r]\in\{[p<q_2],[q_1<r]\}^\pp\subseteq X^\pp=X
$$
\end{proof}

Let $P$ be a finite poset. We say that an lower subset $X$ of $(\Q(P),\leq)$ is {\em of
chain type} if for all pairs $[a<b]\neq[c<d]$ of maximal elements of $X$ of $\Q(P)$, we
have $[a<b]\perp[c<d]$ and either $b<c$ or $d<a$.

\begin{lemma}\label{lemma:chaintypeisdacey} Let $P$ be a finite poset.
Every $X\subset \Q(P)$ of a chain type is Dacey.
\end{lemma}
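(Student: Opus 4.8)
The plan is to decompose $X$ into the down-sets of its maximal elements and then reassemble it with \Cref{lemma:unionofDaceys}. Since $P$ is finite and $X$ is a lower set of $(\Q(P),\le)$, the set $X$ has finitely many maximal elements $[a_1<b_1],\dots,[a_k<b_k]$ and, being a lower set, $X=\bigcup_{i=1}^k [a_i<b_i]^\downarrow$. By \Cref{lemma:downisdacey} each $[a_i<b_i]^\downarrow$ is Dacey, and by \Cref{coro:doubleperpisdown} it is orthoclosed. The chain-type hypothesis gives $[a_i<b_i]\perp[a_j<b_j]$ for $i\ne j$, and then the observation that $[u<v]\le[a<b]\perp[c<d]$ implies $[u<v]\perp[c<d]$ yields $[a_i<b_i]^\downarrow\perp[a_j<b_j]^\downarrow$. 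Thus $X$ is a union of pairwise orthogonal Dacey orthoclosed sets, and if I also show that $X$ itself is orthoclosed, then an induction on $k$ using \Cref{lemma:unionofDaceys} proves $X$ is Dacey.

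For the induction I would first invoke \Cref{lemma:orthogonaliscomparable} to see that all the endpoints are pairwise comparable, and combine this with the strict separation built into the definition of chain type to relabel the maximal elements so that $a_1<b_1<a_2<b_2<\dots<a_k<b_k$. Writing $Y=\bigcup_{i<k}[a_i<b_i]^\downarrow$ and $Z=[a_k<b_k]^\downarrow$, the set $Y$ is again of chain type with maximal elements $[a_1<b_1],\dots,[a_{k-1}<b_{k-1}]$, so by the induction hypothesis $Y$ is Dacey and orthoclosed; moreover $Y\perp Z$. By \Cref{lemma:unionofDaceys} it then suffices to know that $X=Y\cup Z$ is orthoclosed.

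Hence the main obstacle is to prove $X^\pp\subseteq X$. Because $\perp$ turns unions into intersections and $([a_i<b_i]^\downarrow)^\perp=[a_i<b_i]^\perp$, we have $X^\perp=\bigcap_i[a_i<b_i]^\perp$, so $[s<t]\in X^\perp$ iff for every $i$ either $b_i\le s$ or $t\le a_i$. The strict chain $a_1<b_1<\dots<a_k<b_k$ forces the ``above'' indices to form a down-set and the ``below'' indices an up-set, so each $[s<t]\in X^\perp$ lies below the whole chain, above the whole chain, or in a single gap; in particular every gap element $[b_i<a_{i+1}]$ belongs to $X^\perp$. Now take $[p<q]\in X^\pp$. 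Testing $[p<q]$ against the gap elements gives $a_{i+1}\le p$ or $q\le b_i$ for each $i$, and a short monotonicity argument shows these alternatives switch at a single threshold $m$; for an interior threshold this pins $a_{m+1}\le p<q\le b_{m+1}$, so $[p<q]\le[a_{m+1}<b_{m+1}]$ and $[p<q]\in X$.

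The delicate part, which I expect to be the real obstacle, is the two extreme cases in which $[p<q]$ sits entirely below $[a_1<b_1]$ or entirely above $[a_k<b_k]$: there the gap elements leave one endpoint of $[p<q]$ unconstrained, and one must instead exhibit an orthogonal element witnessing the bottom of $[a_1<b_1]$ (respectively the top of $[a_k<b_k]$) in order to force $a_1\le p$ (respectively $q\le b_k$). This is precisely where the ambient structure of $P$ enters the argument. Once $[p<q]\le[a_1<b_1]$ or $[p<q]\le[a_k<b_k]$ is secured in these cases, we obtain $[p<q]\in X$ and hence $X^\pp=X$, so $X=Y\cup Z$ is orthoclosed and \Cref{lemma:unionofDaceys} completes the induction.
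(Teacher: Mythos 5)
Your skeleton is exactly the paper's: decompose $X$ as the union of the down-sets of its maximal elements, prove $X$ orthoclosed, and induct on the number of maximal elements by splitting off the top interval and combining \Cref{lemma:downisdacey} with \Cref{lemma:unionofDaceys}. The interior part of your orthoclosedness argument is also correct: the gap elements $[b_i<a_{i+1}]$ do lie in $X^\perp$ (the chain-type condition gives the strict inequalities $b_i<a_{i+1}$ needed for them to exist), and your threshold argument correctly pins an interior $[p<q]\in X^\pp$ below some $[a_{m+1}<b_{m+1}]$. But the proof is not complete: the two extreme cases you explicitly defer --- forcing $a_1\le p$ when the threshold sits at the bottom, and dually $q\le b_k$ at the top --- are never carried out, and they are not a routine verification. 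In fact, under the lemma's literal hypotheses (an arbitrary finite poset $P$) this step cannot be carried out, because the statement is then false: take $P=\{x,y,z\}$ with $x<y$, $x<z$ and $y,z$ incomparable. Then $\Q(P)=\{[x<y],[x<z]\}$ carries the empty orthogonality relation, and $X=\{[x<y]\}$ is a lower set with a single maximal element, hence of chain type; yet $X^\perp=\emptyset$, so $X^\pp=\Q(P)\neq X$ and $X$ is not even orthoclosed, let alone Dacey. (The paper's own statement shares this slip; its proof tacitly uses bounds, since its formula for $\max(X)^\perp$ involves $\tau(a_1^\downarrow)$ and $\beta(a_{2n}^\uparrow)$, and the lemma is only ever invoked for bounded posets, in \Cref{thm:main} and \Cref{thm:logickalmbach}.)

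With $P$ bounded, the missing step closes exactly along the line you gesture at. If $a_1=0$, then $a_1\le p$ is automatic; if $0<a_1$, then $[0<a_1]\in\tau(a_1^\downarrow)\subseteq X^\perp$, so any $[p<q]\in X^\pp$ satisfies $[p<q]\perp[0<a_1]$, i.e.\ $a_1\le p$ or $q\le 0$, and the latter is impossible; dually, $[b_k<1]$ forces $q\le b_k$. Note that these two witnesses are needed for more than the extreme thresholds: your trichotomy describes $X^\perp$, but elements of $X^\pp$ incomparable to the chain are not excluded by the gap tests alone, and it is the tests against $[0<a_1]$ and $[b_k<1]$ that rule them out. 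So the fix is: add the hypothesis that $P$ is bounded (harmless for all of the paper's applications) and insert the two witness tests. With that done, your argument becomes a complete --- indeed more detailed --- version of the paper's proof, whose own treatment of this very point is the unproved assertion that $X=\max(X)^\pp$ ``is easy to see.''
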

\begin{proof}
Let us prove that $X$ is orthoclosed.
Write
$$
\max(X)=\{[a_1<a_2],\dots,[a_{2n-1}<a_{2n}]\}.
$$
Without loss of generality, assume that $a_1<a_2<a_3<\dots<a_{2n-1}<a_{2n}$.
Obviously,
$$
X=[a_1<a_2]^\downarrow\cup\dots\cup[a_{2n-1}<a_{2n}]^\downarrow
$$
Note that
$$
\max(X)^\perp=\tau(a_1^\downarrow)\cup[a_2<a_3]^\downarrow\cup
[a_4<a_5]^\downarrow\cup\dots\cup[a_{2n-2}<a_{2n-1}]^\downarrow\cup\beta(a_{2n}^\uparrow).
$$
It is easy to see that  $X=\max(X)^\pp$.
In particular, $X$ is orthoclosed. 

To prove that $X$ is Dacey, we shall use induction with respect to $n$.

If $n=0$, then $X=\emptyset$ and $X$ is Dacey.

Let $n>0$.

Put $X_1=[a_{2n-1}<a_{2n}]^\downarrow$,
$X_2=X\setminus([a_{2n-1}<a_{2n}]^\downarrow)$ and note that $X_1\perp X_2$. 
By \Cref{lemma:downisdacey}, $[a_{2n-1}<a_{2n}]^{\downarrow}$ is Dacey.
The set $X_2$ is of a chain type, hence $X_2$ is Dacey by induction hypothesis.
The
conditions of \Cref{lemma:unionofDaceys} are satisfied, hence $X=X_1\cup X_2$
is Dacey.

\end{proof}

\begin{theorem}
\label{thm:main}
Let $P$ be a finite bounded poset.
The following are equivalent.
\begin{enumerate}[(a)]
\item $P$ is a lattice.
\item Every orthoclosed subset of $\Q(P)$ is of chain type.
\item $\Q(P)$ is Dacey.
\item $L(\Q(P))$ is an orthomodular lattice.
\end{enumerate}
\end{theorem}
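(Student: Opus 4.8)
The plan is to prove the cycle (a) $\Rightarrow$ (b) $\Rightarrow$ (c) $\Rightarrow$ (a) and to dispatch the equivalence (c) $\Leftrightarrow$ (d) separately. The latter is essentially free: by definition $\Q(P)$ is Dacey exactly when it satisfies the hypothesis of \Cref{thm:dacey}, and that theorem asserts this holds if and only if $L(\Q(P))$ is orthomodular. Thus all the mathematical content sits in the equivalence of (a), (b), and (c), and the bounded, finite hypotheses on $P$ are what let the supporting lemmas apply.

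For (a) $\Rightarrow$ (b), I would fix an arbitrary orthoclosed $X\subseteq\Q(P)$, which is a lower set by \Cref{lemma:orthoclosedisdownset}, and check the chain-type condition directly on its maximal elements. Let $[a<b]\neq[c<d]$ be two distinct maximal elements of $X$. First I would show they must be orthogonal: assuming $[a<b]\not\perp[c<d]$ and using that $P$ is a lattice, \Cref{lemma:emma}(b) produces $[a\wedge c<b\vee d]\in X$; since $[a<b]\leq[a\wedge c<b\vee d]$ and $[c<d]\leq[a\wedge c<b\vee d]$, maximality forces both maximal elements to coincide with this common upper bound, hence with each other, contradicting distinctness. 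Next I would upgrade $[a<b]\perp[c<d]$ (that is, $b\leq c$ or $d\leq a$) to the strict form $b<c$ or $d<a$: if, say, $b=c$, then \Cref{lemma:mergetouching} applied to $[a<b]$ and $[b<d]$ yields $[a<d]\in X$ strictly above $[a<b]$, again contradicting maximality, and the case $d=a$ is symmetric. This is the step I expect to carry the real weight, since it is precisely where the lattice hypothesis enters, through the existence of $a\wedge c$ and $b\vee d$.

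The implication (b) $\Rightarrow$ (c) is then a short assembly: if every orthoclosed subset is of chain type, then by \Cref{lemma:chaintypeisdacey} every orthoclosed subset is Dacey, which is exactly the statement that $\Q(P)$ is a Dacey space.

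To close the cycle with (c) $\Rightarrow$ (a) I would argue the contrapositive. Suppose $P$ is not a lattice. By \Cref{fact:twomaxes} there is a closed order ideal $I$ with two distinct maximal elements $m_1,m_2$; since $I$ has more than one maximal element, neither can be $0$, so $[0<m_1],[0<m_2]\in\tau(I)$. Because $I=I^{\ud}$, \Cref{coro:tauclosure} shows $\tau(I)$ is orthoclosed. I would then verify that $\{[0<m_1]\}$ is a basis of $\tau(I)$: any $[c<d]\in\tau(I)$ orthogonal to $[0<m_1]$ forces $m_1\leq c$, while $d\in I$ forces $c\in I$ and then maximality of $m_1$ forces $c=m_1$, which contradicts maximality, so nothing can be added. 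Finally, by \Cref{coro:doubleperpisdown}, $\{[0<m_1]\}^\pp=[0<m_1]^\downarrow=\tau(m_1^\downarrow)$, which omits $[0<m_2]$; hence $\{[0<m_1]\}^\pp$ is a proper subset of $\tau(I)$, so $\tau(I)$ is a non-Dacey orthoclosed set and $\Q(P)$ is not Dacey. This completes the cycle and, together with (c) $\Leftrightarrow$ (d), the proof.
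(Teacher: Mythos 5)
Your proposal is correct and follows essentially the same route as the paper's proof: the same cycle (a)$\Rightarrow$(b)$\Rightarrow$(c)$\Rightarrow$(a) using \Cref{lemma:emma}, \Cref{lemma:mergetouching}, \Cref{lemma:chaintypeisdacey} and \Cref{fact:twomaxes}, with (c)$\Leftrightarrow$(d) delegated to \Cref{thm:dacey}. The only deviations are cosmetic: you spell out the basis verification that the paper dismisses with ``Clearly'', treat the symmetric case $d=a$ explicitly, and compute $\{[0<m_1]\}^\pp$ via \Cref{coro:doubleperpisdown} rather than directly through \Cref{lemma:derp}.
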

\begin{proof}~
\par\noindent(a)$\implies$(b): Let $X$ be an orthoclosed subset of $\Q(P)$,
let $[a<b],[c<d]\in\max(X)$ with $[a<b]\neq[c<d]$.
Let us prove that $[a<b]\perp[c<d]$.
Assume the contrary; by \Cref{lemma:emma} (b), $[a\wedge c<b\vee d]\in X$. As
$[a<b]\leq[a\wedge c<b\vee d]$ and $[a<b]$ is maximal in $X$,
$[a<b]=[a\wedge c<b\vee d]$. This implies $a=a\wedge c$ and
$b=b\vee d$, in other words, $a\leq c$ and $d\leq b$ so
$[c<d]\leq[a<b]$, hence $[c<d]=[a<b]$ because $[c<d]$ is maximal in $\Q(P)$
and we have reached the desired contradiction.

From $[a<b]\perp[c<d]$ it follows that either $b\leq c$ or $d\leq a$. Assume that
$b=c$. Then, by \Cref{lemma:mergetouching}, $[a<d]\in X$ and this contradicts
the maximality of $[a<b]$ and $[b<c]$ in $X$.

\par\noindent(b)$\implies$(c):
By \Cref{lemma:chaintypeisdacey}.
\par\noindent(c)$\implies$(a):
Suppose that $P$ is a non-lattice. We need to prove that $\Q(P)$ is
not Dacey. By \Cref{fact:twomaxes}, since $P$ is a non-lattice, 
there exists a lower set $I$ of $P$ with $I=I^\ud$ such
that $I$ has at least two maximal elements.
Let $a,b$ be two distinct maximal elements of $I$. By \Cref{coro:tauclosure},
$$
\tau(I)^\pp=\tau(I^\ud)=\tau(I),
$$
hence $\tau(I)$ is orthoclosed. Clearly, $\{[0<a]\}$ is a basis of $\tau(I)$
and we may apply \Cref{lemma:derp} to compute
$$
\{[0<a]\}^\pp=(\beta(a^\uparrow))^\perp=\tau(a^\ud)=\tau(a^\downarrow).
$$
However, as $[0<b]\notin\tau(a^\downarrow)$, $\tau(a^\downarrow)\neq I$. Therefore,
$(\Q(P),\perp)$ is not Dacey.

\par\noindent(c)$\Leftrightarrow$(d): By \Cref{thm:dacey}.

\end{proof}
\begin{figure}
\begin{center}
\vcenteredhbox{\includegraphics{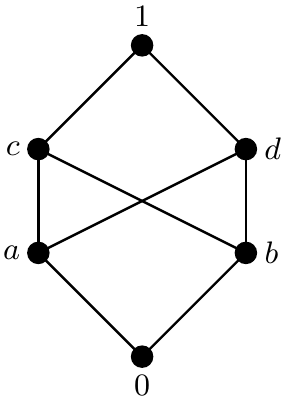}}
\hskip 3em
\vcenteredhbox{\includegraphics{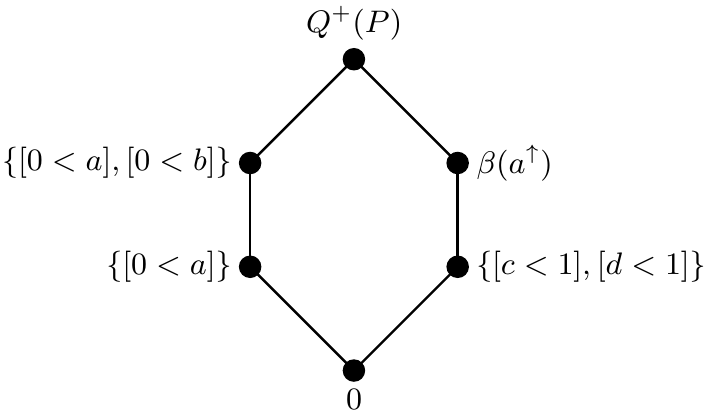}}
\caption{A non-lattice poset and a sub-ortholattice of its logic}
\label{fig:r6}
\end{center}
\end{figure}

\begin{example}
Consider the bounded poset $P$ on the left side of \Cref{fig:r6}.
Since it is a non-lattice, \Cref{thm:main} tells us that $(\Q(P),\perp)$ is not a Dacey space or, equivalently, that  
$L(\Q(P),\perp)$ is not orthomodular. Hence
$L(\Q(P),\perp)$ must contain a hexagon as its sub-ortholattice. One of such 
hexagons is depicted on the right side of \Cref{fig:r6}.
To see more directly that $(\Q(P),\perp)$ is not a Dacey space, just consider the 
orthoclosed set $\{[0<a],[0<b]\}$ and its basis $\{[0<a]\}$.
\end{example}

\begin{theorem}\label{thm:chainboolean}
Let $P$ be a bounded poset.
Then $P$ is a chain if and only if $L(\Q(P))$ is a Boolean algebra.
\end{theorem}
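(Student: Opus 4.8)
The plan is to route everything through the characterization \eqref{eq:disjisortho}: an ortholattice $L$ is a Boolean algebra iff $a\wedge b=0$ implies $a\leq b^\perp$. Since in $L(\Q(P))$ meets are intersections and the order is inclusion, for orthoclosed sets $X,Y$ this condition reads: if $X\cap Y=\emptyset$, then $X\subseteq Y^\perp$ (equivalently $X\perp Y$). So I would prove both implications by showing that this ``disjoint implies orthogonal'' property is exactly what being a chain buys us.

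For the direction ``$P$ a chain $\implies$ Boolean'', suppose $P$ is a chain and let $X,Y$ be orthoclosed with $X\cap Y=\emptyset$. Arguing by contradiction, if $X\not\subseteq Y^\perp$ I would choose $[a<b]\in X$ and $[c<d]\in Y$ with $[a<b]\not\perp[c<d]$. In a chain, non-orthogonality unfolds to $c<b$ and $a<d$, which together with $a<b$ and $c<d$ yields $\max(a,c)<\min(b,d)$. The quotient $[\max(a,c)<\min(b,d)]$ then satisfies $[\max(a,c)<\min(b,d)]\leq[a<b]$ and $[\max(a,c)<\min(b,d)]\leq[c<d]$, so because orthoclosed sets are lower sets (\Cref{lemma:orthoclosedisdownset}) it lies in both $X$ and $Y$, contradicting $X\cap Y=\emptyset$. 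Hence $X\subseteq Y^\perp$, so \eqref{eq:disjisortho} holds and $L(\Q(P))$ is Boolean.

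For the converse I would argue by contraposition. If $P$ is not a chain, pick incomparable $u,v$; since neither can equal $0$ or $1$ we have $0<u<1$ and $0<v<1$, so $[0<u]$ and $[v<1]$ belong to $\Q(P)$. Put $A=[0<u]^{\downarrow}$ and $B=[v<1]^{\downarrow}$, both orthoclosed by \Cref{coro:doubleperpisdown}. Any $[a<b]\in A\cap B$ would force $v\leq a<b\leq u$, hence $v\leq u$, contradicting incomparability; thus $A\cap B=\emptyset$, i.e.\ $A\wedge B=0$. Yet $[0<u]\in A$ and $[v<1]\in B$ are not orthogonal (orthogonality would need $u\leq v$ or $1\leq 0$), so $A\not\subseteq B^\perp$. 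This violates \eqref{eq:disjisortho}, so $L(\Q(P))$ is not a Boolean algebra.

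The chain arithmetic in the forward direction (verifying $\max(a,c)<\min(b,d)$ and that the resulting quotient sits below both intervals) is routine; the one real idea, and the step I expect to be the crux, is the \emph{asymmetric} choice $[0<u]$ and $[v<1]$ in the converse. The symmetric pair $[0<u],[0<v]$ need not be disjoint when $u$ and $v$ share a nonzero lower bound, whereas pairing the bottom of one incomparable element with the top of the other makes disjointness automatic precisely because $v\leq a<b\leq u$ is impossible, while non-orthogonality of the two generating quotients is immediate.
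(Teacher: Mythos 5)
Your proposal is correct and takes essentially the same route as the paper: both directions run through the characterization \eqref{eq:disjisortho}, and your converse witnesses $[0<u]^\downarrow$ and $[v<1]^\downarrow$ are exactly the paper's $[0<b]^\downarrow$ and $[a<1]^\downarrow$ up to renaming. Your single overlap interval $[\max(a,c)<\min(b,d)]$ merely compresses the paper's explicit four-case analysis (Cases 1a--2b) into one step, a stylistic streamlining rather than a different argument.
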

\begin{proof}~
Suppose that $P$ is a chain.
$L(\Q(P))$ is a Boolean algebra
if and only if, for all orthoclosed subsets $X,Y$,
$$
X\cap Y=\emptyset\implies X\perp Y
$$
Assume the contrary, that means, $X\cap Y=\emptyset$ and $X,Y$ are not
orthogonal.
There are $[x_1<x_2]\in X$ and
$[y_1<y_2]\in Y$ such that $[x_1<x_2]\not\perp[y_1<y_2]$.
As $[x_1<x_2]\not\perp[y_1<y_2]$, we see that $x_2\not\leq y_1$ and
$y_2\not\leq x_1$. Since $P$ is a chain, this is equivalent to
$y_1<x_2$ and $x_1<y_2$.
\begin{description}
\item[(Case 1)]$x_1\leq y_1$: we see that
$x_1\leq y_1<x_2$, hence $[y_1<x_2]\leq[x_1\leq x_2]$ and
$[y_1<x_2]\in X$.
\item[(Case 1a)]$y_2\leq x_2$: this implies $x_1\leq y_1<y_2\leq x_2$,
hence $[y_1<y_2]\in X$ and $X,Y$ are not disjoint.
\item[(Case 1b)]$x_2<y_2$: we see that $x_1\leq y_1<x_2\leq y_2$, hence
$[y_1<x_2]\leq[y_1<y_2]\in Y$ and $[y_1<x_2]\in Y$ and $X,Y$ are not disjoint.
\item[(Case 2)]$y_1<x_1$: we see that $y_1<x_1<x_2$.
\item[(Case 2a)]$x_2\leq y_2$: this implies $y_1<x_1<x_2\leq y_2$, hence
$[x_1<x_2]\leq[y_1<y_2]$ and $[x_1<x_2]\in Y$ and $X,Y$ are not disjoint.
\item[(Case 2b)]$y_2<x_2$: this implies $y_1<x_1<y_2<x_2$, hence
$[x_1<y_2]\leq[x_1<x_2]\in X$ and $[x_1<y_2]\leq[y_1<y_2]\in Y$ and
$X,Y$ are not disjoint.
\end{description}

Suppose that $P$ is not a chain. Let $a,b$ be incomparable elements of $P$.
Then $[a<1]^\downarrow$ and $[0<b]^\downarrow$ are disjoint elements of $L(\Q(P))$
that are not orthogonal. Hence $L(\Q(P))$ is not a Boolean algebra.
\end{proof}

Let us remark that \Cref{thm:chainboolean} is not true if we do not assume that $P$ is bounded. To see this, let $P$ be a two-element antichain. Then $\Q(P)$ is empty and $L(\Q(P),\perp)$ is a two-element Boolean algebra.

\section{Connections to other constructions}

Let us extend our $[x<y]$ notation to an arbitrary finite number of elements:
if $C=\{x_1,\dots,x_n\}$ is a finite chain in a poset $P$,
we write $C=[x_1<\dots<x_n]$ to indicate the partial order.

The following construction was discovered by Kalmbach in \cite{kalmbach1977orthomodular}.
Let $P$ be a bounded lattice, write
$$
K(P)=\{C:C\text{ is a finite chain in $P$ with even number of elements}\}
$$
Define a partial order on $K(P)$ by the following rule:
$$
[x_1<x_2<\dots<x_{2n-1}<x_{2n}]\leq[y_1<y_2<\dots<y_{2k-1}<y_{2k}]
$$
if for every $1\leq i\leq n$ there is $1\leq j\leq k$ such that
$$
y_{2j-1}\leq x_{2i-1}<x_{2i}\leq y_{2j}.
$$
Define a unary operation $C\mapsto C^\perp$ on $K(P)$ to be the symmetric
difference with the set $\{0,1\}$. 

\begin{theorem}\cite{kalmbach1977orthomodular}
For every bounded lattice $P$, $K(P)$ is an orthomodular lattice.
\end{theorem}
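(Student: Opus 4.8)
The plan is to deduce Kalmbach's theorem from the orthomodularity of the logic $L(\Q(P))$ by exhibiting an ortholattice isomorphism of $K(P)$ onto a sub-ortholattice of $L(\Q(P))$. Define $\Phi\colon K(P)\to L(\Q(P))$ by
$$
\Phi([x_1<x_2<\dots<x_{2n-1}<x_{2n}])=[x_1<x_2]^\downarrow\cup[x_3<x_4]^\downarrow\cup\dots\cup[x_{2n-1}<x_{2n}]^\downarrow,
$$
with $\Phi(\emptyset)=\emptyset$. By the description of chain-type sets in the proof of \Cref{lemma:chaintypeisdacey}, each $\Phi(C)$ is an orthoclosed set of chain type with finitely many maximal elements, and conversely every such set arises this way. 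First I would check that $\Phi$ is an order isomorphism of $K(P)$ onto the collection $M$ of all orthoclosed chain-type sets with finitely many maximal elements: injectivity holds because $\max(\Phi(C))$ recovers the consecutive pairs of $C$, and the Kalmbach order $C\leq D$ unwinds to $\Phi(C)\subseteq\Phi(D)$, since $[x_{2i-1}<x_{2i}]^\downarrow\subseteq\Phi(D)$ is exactly the existence of a block $[y_{2j-1}<y_{2j}]$ of $D$ with $y_{2j-1}\leq x_{2i-1}<x_{2i}\leq y_{2j}$. Note that $\Phi$ sends the top $\{0,1\}$ to $\Q(P)$ and the bottom $\emptyset$ to $\emptyset$.

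Next I would show that $\Phi$ intertwines the two orthocomplementations and that $M$ is closed under the lattice operations. For the orthocomplement, the formula for $\max(X)^\perp$ from the proof of \Cref{lemma:chaintypeisdacey}, together with the observation that the boundary terms $\tau(x_1^\downarrow)$ and $\beta(x_{2n}^\uparrow)$ equal the downsets $[0<x_1]^\downarrow$ and $[x_{2n}<1]^\downarrow$ (and vanish precisely when $x_1=0$, respectively $x_{2n}=1$), yields $\Phi(C)^\perp=\Phi(C\mathbin{\triangle}\{0,1\})$; thus symmetric difference with $\{0,1\}$ corresponds exactly to $X\mapsto X^\perp$. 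Closure of $M$ under intersection is the main computation: using $[a<b]^\downarrow\cap[c<d]^\downarrow=[a\vee c<b\wedge d]^\downarrow$ (or $\emptyset$), the intersection of two members of $M$ is the union of the finitely many nondegenerate blocks $[x_{2i-1}\vee y_{2j-1}<x_{2i}\wedge y_{2j}]^\downarrow$; any two such blocks are separated (for $i_1<i_2$ one has $x_{2i_1}\wedge y_{2j_1}\leq x_{2i_1}\leq x_{2i_2-1}$, with the analogous estimate when the two blocks share their first index), so after discarding non-maximal blocks and merging any pair that touches via \Cref{lemma:mergetouching}, the intersection is again chain type with finitely many maximal elements. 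Hence $M$ is closed under intersection and orthocomplement and contains $\emptyset$ and $\Q(P)$, so it is a sub-ortholattice of $L(\Q(P))$ and $\Phi$ is an ortholattice isomorphism onto it.

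Finally I would prove that $M$ is orthomodular. When $P$ is finite this is immediate, since by \Cref{thm:main} every orthoclosed subset of $\Q(P)$ is of chain type, so $M=L(\Q(P))$ is orthomodular and hence so is $K(P)$. For an arbitrary bounded lattice the obstacle — and the hardest part of the argument — is that \Cref{thm:main} is unavailable, and one cannot simply pass to the sublattice generated by the finitely many elements at hand, because such a sublattice may be infinite. Instead I would argue locally. Every element of $M$ has finitely many maximal elements, so the proof of \Cref{lemma:chaintypeisdacey} applies verbatim to show that every element of $M$ is Dacey. It then suffices to prove that if $X\subseteq Y$ are orthoclosed and both Dacey, then $Y=X\vee(Y\cap X^\perp)$. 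To see this, choose a basis $B_1$ of $X$ and a basis $B_2$ of $Y\cap X^\perp$; since $X\perp(Y\cap X^\perp)$, the set $B_1\cup B_2$ is pairwise orthogonal, and using that $X$ is Dacey (so $B_1^\perp=X^\perp$ by \Cref{lemma:chardacey}) one verifies that $B_1\cup B_2$ is maximal in $Y$, i.e. a basis of $Y$. As $Y$ is Dacey, $Y=(B_1\cup B_2)^\pp$, and since $B_1\cup B_2\subseteq X\vee(Y\cap X^\perp)$, which is orthoclosed, we obtain $Y\subseteq X\vee(Y\cap X^\perp)$, whence equality. Because $M$ is a sub-ortholattice, its meets and joins coincide with those of $L(\Q(P))$, so this gives the orthomodular law throughout $M$; therefore $K(P)\cong M$ is an orthomodular lattice.
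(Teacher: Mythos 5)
Your proof is correct, and it takes a genuinely different route from the paper, which in fact offers no proof of this statement at all: the theorem is simply cited from Kalmbach, and the paper's own related machinery (\Cref{thm:logickalmbach}) identifies $K(P)$ with the whole logic $L(\Q(P),\perp)$ only for \emph{finite} lattices, where \Cref{thm:main} guarantees that every orthoclosed set is of chain type. You instead realize $K(P)$, for an arbitrary bounded lattice, as a sub-ortholattice of $L(\Q(P),\perp)$ and prove orthomodularity locally, and both nontrivial ingredients check out. For closure under intersection, the block formula $[a<b]^\downarrow\cap[c<d]^\downarrow=[a\vee c<b\wedge d]^\downarrow$ uses exactly the lattice hypothesis, your separation estimates give $x_{2i_1}\wedge y_{2j_1}\leq x_{2i_2-1}\vee y_{2j_2-1}$ between distinct blocks, and \Cref{lemma:mergetouching} legitimately merges touching blocks because an intersection of orthoclosed sets is orthoclosed; weak separation plus ``not touching'' yields the strict separation that chain type demands. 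For orthomodularity, you correctly identify the real obstacle: for infinite $P$ the logic $L(\Q(P),\perp)$ is not known to be orthomodular (and your embedding cannot be onto, since the paper notes $K([0,1]_{\mathbb R})$ is not complete while every $L(\Q(P),\perp)$ is), so orthomodularity cannot be inherited from the ambient lattice. Your relativized half of Dacey's theorem --- for Dacey orthoclosed $X\subseteq Y$ one has $Y=X\vee(Y\cap X^\perp)$, proved by splicing a basis of $X$ with a basis of $Y\cap X^\perp$ and using \Cref{lemma:chardacey}(b) --- is exactly right, and the reduction to it is sound because a sub-ortholattice computes meets and joins as in the ambient logic. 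Your implicit claim that the proofs of \Cref{lemma:derp} through \Cref{lemma:chaintypeisdacey} work for infinite bounded lattices is also correct: they use boundedness of $P$ and finiteness of the set of blocks, never finiteness of $P$. What your approach buys is a proof of Kalmbach's theorem in full generality from the orthogonality-space machinery; what it costs is redoing by hand, in the infinite setting, verifications the paper only records for finite posets.

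One claim should be deleted or downgraded: ``conversely every such set arises this way.'' For infinite $P$ it is not argued (and not clear) that an orthoclosed chain-type set with finitely many maximal elements must equal the union of the downsets of those maximal elements --- a lower set may contain elements lying below no maximal element. Fortunately nothing in your argument uses this surjectivity: define $M$ to be the image $\Phi(K(P))$; your computations show precisely that this image contains $\emptyset$ and $\Q(P)$ and is closed under $X\mapsto X^\perp$ and under intersection (hence, by De Morgan, under $\vee$), which is all that is needed for $M$ to be a sub-ortholattice isomorphic to $K(P)$.
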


The lattice $K(P)$ is called {\em the Kalmbach construction}.

\begin{theorem}\label{thm:logickalmbach}
For every finite lattice $P$, $K(P)\simeq L(\Q(P),\perp)$.
\end{theorem}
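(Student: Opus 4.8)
The plan is to build an explicit isomorphism of ortholattices $\Psi\colon K(P)\to L(\Q(P),\perp)$ and read off its properties from the structure theory already developed. Since $P$ is a finite lattice it is a finite bounded poset, so by \Cref{thm:main} (a)$\implies$(b) every orthoclosed subset $X$ of $\Q(P)$ is of chain type. The description in the proof of \Cref{lemma:chaintypeisdacey} then tells me that such an $X$ is completely determined by its maximal elements $\max(X)=\{[a_1<a_2],\dots,[a_{2n-1}<a_{2n}]\}$, which line up into a single even chain $a_1<a_2<\dots<a_{2n}$ of $P$, and that $X=[a_1<a_2]^\downarrow\cup\dots\cup[a_{2n-1}<a_{2n}]^\downarrow$. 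This is exactly the data of an element of $K(P)$, so the candidate map is clear.

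First I would define $\Psi([x_1<x_2<\dots<x_{2n}])=[x_1<x_2]^\downarrow\cup\dots\cup[x_{2n-1}<x_{2n}]^\downarrow$. I would check that the image is of chain type (its maximal elements are the pairwise strictly separated $[x_{2i-1}<x_{2i}]$), hence orthoclosed by \Cref{lemma:chaintypeisdacey}, so $\Psi$ really lands in $L(\Q(P),\perp)$. Bijectivity is then immediate from the previous paragraph: the inverse sends an orthoclosed $X$ to the even chain assembled from $\max(X)$. Next comes the order: I would show that $C\leq D$ in $K(P)$ iff $\Psi(C)\subseteq\Psi(D)$. Because $\Psi(D)$ is a lower set, the generator $[x_{2i-1}<x_{2i}]^\downarrow$ of $\Psi(C)$ is contained in $\Psi(D)$ iff its top $[x_{2i-1}<x_{2i}]$ lies in $\Psi(D)$, i.e.\ iff $[x_{2i-1}<x_{2i}]\leq[y_{2j-1}<y_{2j}]$ for some $j$, which unravels to $y_{2j-1}\leq x_{2i-1}<x_{2i}\leq y_{2j}$ --- precisely the defining condition of the Kalmbach order. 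Thus $\Psi$ is an order isomorphism, hence a lattice isomorphism.

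The remaining, and main, point is that $\Psi$ commutes with orthocomplementation, i.e.\ $\Psi(C)^\perp=\Psi(C\triangle\{0,1\})$. Since $X=\max(X)^\pp$ I have $X^\perp=\max(X)^\perp$, and the proof of \Cref{lemma:chaintypeisdacey} supplies the explicit formula
\[
\max(X)^\perp=\tau(a_1^\downarrow)\cup[a_2<a_3]^\downarrow\cup\dots\cup[a_{2n-2}<a_{2n-1}]^\downarrow\cup\beta(a_{2n}^\uparrow).
\]
Here is where boundedness of $P$ enters: $\tau(a_1^\downarrow)$ is $\emptyset$ when $a_1=0$ and equals $[0<a_1]^\downarrow$ otherwise, and dually $\beta(a_{2n}^\uparrow)$ is $\emptyset$ when $a_{2n}=1$ and equals $[a_{2n}<1]^\downarrow$ otherwise.

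The main obstacle is purely the bookkeeping of matching the displayed formula against the symmetric difference $C\triangle\{0,1\}$. I expect a four-way case split according to whether $x_1=0$ and whether $x_{2n}=1$: inserting or deleting the endpoints $0,1$ in the chain corresponds exactly to the appearance or disappearance of the boundary terms $[0<a_1]^\downarrow$ and $[a_{2n}<1]^\downarrow$, while the interior terms $[a_2<a_3]^\downarrow,\dots,[a_{2n-2}<a_{2n-1}]^\downarrow$ reassemble into the complementary even chain. In every case one gets $X^\perp=\Psi(C\triangle\{0,1\})$, so $\Psi$ is an isomorphism of ortholattices. Consistency with \Cref{thm:main} is a reassuring sanity check, since $K(P)$ is orthomodular and therefore forces $L(\Q(P),\perp)$ to be orthomodular as well.
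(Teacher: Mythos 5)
Your proposal is correct and follows essentially the same route as the paper's own proof: the same map $C\mapsto[x_1<x_2]^\downarrow\cup\dots\cup[x_{2n-1}<x_{2n}]^\downarrow$, orthoclosedness via \Cref{lemma:chaintypeisdacey}, surjectivity via \Cref{thm:main}, and the same four-way case split (on $0\in C$, $1\in C$) for compatibility with the orthocomplementation. If anything, you supply slightly more detail than the paper at the points it labels ``easy to check'' --- the unravelling of the Kalmbach order from containment of lower sets, and the uniform treatment of the boundary terms via $\tau(a_1^\downarrow)=[0<a_1]^\downarrow$ for $a_1\neq 0$ and its dual.
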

\begin{proof}
For $C=[x_1<x_2<\dots<x_{2n-1}<x_{2n}]\in K(P)$, write
$f(C)=[x_1<x_2]^\downarrow\cup\dots[x_{2n-1}<x_{2n}]^\downarrow$.
Then $f(C)$ is a lower subset of $\Q(P)$ the chain type. By
\Cref{lemma:chaintypeisdacey}, $f(C)$ is Dacey and hence orthoclosed.
Obviously, $f$ is isotone, $f(\emptyset)=\emptyset$ and
$f(P)=f([0<1])=[0<1]^\downarrow=\Q(P)$, hence $f\colon K(P)\to L(\Q(P),\perp)$
is an isotone mapping preserving the bounds. Moreover,
$f$ preserves the complementation. There
are four cases to check, each of the cases consisting of
a conjunction of one of $(0\in C,0\notin C)$ and one of
$(1\in C,1\notin C)$. We will check one of the cases,
the remaining three are very similar.

Suppose that $0\in C$, $1\notin C$, that means, $0=x_1$ and $x_{2n}<1$:
$$
f(C)=f([0<x_2<\dots<x_{2n-1}<x_{2n}])=[0<x_2]^\downarrow\cup\dots\cup[x_{2n-1}<x_{2n}]^\downarrow
$$
Then
$$
f(C^\perp)=f([x_2<x_3<\dots<x_{2n}<1])=[x_2<x_3]^\downarrow\cup\dots\cup[x_{2n}<1]^\downarrow
$$
It is easy to see that $f(C)^\perp=f(C^\perp)$.

Let us construct an isotone map inverse to $f$. Let $X$ be an orthoclosed
subset of $\Q(P)$. By \Cref{thm:main}, $X$ is of the chain type. Write
$\max(X)=\{[x_1<x_2],\dots,[x_{2n-1}<x_{2n}]\}$ and assume,
without loss of generality, that
$x_1<x_2<\dots<x_{2n-1}<x_{2n}$. This chain is an element of $K(P)$,
put $g(X)=[x_1<x_2<\dots<x_{2n-1}<x_{2n}]$. It is easy to check
$g$ is isotone, preserves the bounds and preserves the orthocomplementation.
\end{proof}

For general infinite lattices, \Cref{thm:logickalmbach} is not true. For example,
for the real unit interval $[0,1]_{\mathbb R}$, the Kalmbach construction 
$K([0,1]_{\mathbb R})$ is not a complete lattice, whereas every $L(\Q(P),\perp)$ is
a complete lattice.

For every poset $P$, its Dedekind-MacNeille completion $\compl P$ can be
represented as the complete lattice of all lower sets $I$ with $I=I^\ud$. For
a family $(I_j)_{j\in H}$ of elements of $\compl P$, their meet is their intersection
and their join is the closure of their union:
$$
\bigvee_{j\in H}I_j=(\bigcup_{j\in H}I_j)^\ud
$$

By \Cref{coro:tauclosure}, for every lower set $I$ we have $\tau(I)^\pp=\tau(I^\ud)$.
In particular, $\tau(I^\ud)$ is orthoclosed. Therefore, for every $I\in\compl P$,
$\tau(I)$ is orthoclosed.
\begin{theorem}
For every bounded poset $P$, $\tau\colon\compl P\to L(\Q(P),\perp)$ is an injective
morphism of complete lattices.
\end{theorem}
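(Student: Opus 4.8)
The plan is to verify that $\tau$ preserves arbitrary meets and arbitrary joins and that it is injective; since $\compl P$ and $L(\Q(P),\perp)$ are both complete lattices, these three facts together are exactly the assertion that $\tau$ is an injective morphism of complete lattices. The map is well-defined into $L(\Q(P),\perp)$ because, by \Cref{coro:tauclosure}, $\tau(I)^\pp=\tau(I^\ud)=\tau(I)$ for every $I\in\compl P$, so each $\tau(I)$ is orthoclosed. The two facts I will use repeatedly are that $\tau$ commutes with intersections and with unions; both are immediate from the definition, since $[a<b]\in\tau(U)$ depends only on whether $b\in U$.

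Preservation of meets is then essentially free. As recorded just above the statement, the meet of a family $(I_j)_{j\in H}$ in $\compl P$ is $\bigcap_{j\in H}I_j$, and in $L(\Q(P),\perp)$ meets are likewise given by intersection. Hence $\tau\bigl(\bigwedge_{j\in H}I_j\bigr)=\tau\bigl(\bigcap_{j\in H}I_j\bigr)=\bigcap_{j\in H}\tau(I_j)=\bigwedge_{j\in H}\tau(I_j)$, the middle equality being preservation of intersections.

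The one step carrying real content is preservation of joins, and it is precisely here that \Cref{coro:tauclosure} is needed. In $\compl P$ the join is $\bigvee_{j\in H}I_j=\bigl(\bigcup_{j\in H}I_j\bigr)^\ud$, while in $L(\Q(P),\perp)$ it is the double orthocomplement of the union. Writing $U=\bigcup_{j\in H}I_j$, which is again a lower set, I would compute
$$
\tau\Bigl(\bigvee_{j\in H}I_j\Bigr)=\tau(U^\ud)=\tau(U)^\pp=\Bigl(\bigcup_{j\in H}\tau(I_j)\Bigr)^\pp=\bigvee_{j\in H}\tau(I_j),
$$
using \Cref{coro:tauclosure} for the second equality and preservation of unions for the third. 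The empty-family instances record preservation of the bounds: $\tau$ sends the least element $\{0\}$ of $\compl P$ to $\emptyset$ and the greatest element $P$ to $\Q(P)$.

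For injectivity I would use the least element $0$ of $P$. Since $I\mapsto I^\ud$ is monotone and $\emptyset^\ud=\{0\}$, every $I\in\compl P$ satisfies $\{0\}=\emptyset^\ud\subseteq I^\ud=I$, so $0\in I$. For $b>0$ we have $[0<b]\in\Q(P)$ and $[0<b]\in\tau(I)$ if and only if $b\in I$; thus $\tau(I)$ determines $I\setminus\{0\}$ and, as $0\in I$, determines $I$ itself. Consequently $\tau(I)=\tau(J)$ forces $I=J$. I expect the join computation to be the crux of the argument, but it is entirely concentrated in \Cref{coro:tauclosure}; everything else, including the appeals to boundedness of $P$ (used both in \Cref{coro:tauclosure} and to secure the element $0$), is routine.
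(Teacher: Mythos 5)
Your proof is correct and takes essentially the same route as the paper: the only difference is that for joins you apply \Cref{coro:tauclosure} directly to the union $\bigcup_{j\in H}I_j$ (which is again a lower set), whereas the paper unwinds the same computation through $\beta$ via \Cref{lemma:derp} --- the same argument, since the corollary is itself derived from that lemma. You also spell out the injectivity step (every $I\in\compl P$ contains $0$, and $[0<b]\in\tau(I)$ iff $b\in I$) and the preservation of bounds, which the paper dismisses as clear; this is a welcome elaboration, not a different approach.
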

\begin{proof}
Clearly, $\tau$ is injective and preserves the bounds of $\compl P$.
Let $(I_j)_{j\in H}$ be a family of elements of $\compl P$. Clearly,
$$
\tau(\bigcap_{j\in H}I_j)=\bigcap_{j\in H}\tau(I_j)
$$
so $\tau$ preserves all meets. For joins, we may compute
\begin{multline*}
\bigvee_{j\in H}\tau(I_j)=\bigl(\bigcup_{j\in H}\tau(I_j)\bigr)^\pp=\bigl(\bigcap_{j\in H}\tau(I_j)^\perp\bigr)^\perp=
\bigl(\bigcap_{j\in H}\beta(I_j^\uparrow)\bigr)^\perp=\\
\beta\bigl((\bigcup_{j\in H}I_j)^\uparrow\bigr)^\perp=
\tau\bigl((\bigcup_{j\in H}I_j)^\ud\bigr)=\tau(\bigvee_{j\in H}I_j)
\end{multline*}
\end{proof}

Let us close with a conjecture.
\begin{conjecture}
There exists an infinite bounded non-lattice $P$ such that $(\Q(P),\perp)$ is
Dacey.
\end{conjecture}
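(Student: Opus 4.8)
The plan is to settle the conjecture affirmatively by exhibiting an explicit witness and proving directly that its orthogonality space is Dacey. Take
\[
P=\{0,1\}\cup\{a_i:i\in\mathbb{N}\}\cup\{b_i:i\in\mathbb{N}\}\cup\{u_i:i\in\mathbb{N}\},
\]
ordered so that $0,1$ are the bounds, $a_0<a_1<\cdots$ and $b_0<b_1<\cdots$ are two increasing chains, $u_0>u_1>\cdots$ is a decreasing chain, every $a_i$ and every $b_i$ lies below every $u_j$, and $a_i$ is incomparable to $b_j$ for all $i,j$. This $P$ is infinite and bounded, and it is a non-lattice because $a_0\vee b_0$ does not exist: the set of upper bounds of $\{a_0,b_0\}$ is $\{u_j:j\in\mathbb{N}\}\cup\{1\}$, which has no least element since $(u_j)$ is strictly decreasing. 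This is exactly the phenomenon that \Cref{fact:twomaxes} cannot detect: the unique non-principal closed order ideal is the cut $C=\{0\}\cup\{a_i:i\in\mathbb{N}\}\cup\{b_i:i\in\mathbb{N}\}$, and $C$ has \emph{no} maximal element, so the obstruction built in the proof of \Cref{thm:main} (the implication (c)$\Rightarrow$(a)) simply does not arise. Indeed no closed order ideal of $P$ has two maximal elements: if incomparable $a_i,b_j$ were both maximal in a closed ideal $I$, then $I^\uparrow\subseteq\{a_i,b_j\}^\uparrow=\{u_k:k\in\mathbb{N}\}\cup\{1\}$ forces $I\supseteq C\ni a_{i+1}>a_i$, a contradiction.

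The heart of the argument is to show that $\tau(C)$, which is orthoclosed by \Cref{coro:tauclosure}, is nevertheless Dacey. By \Cref{lemma:orthogonaliscomparable} the endpoints occurring in any basis form a chain; since $a_i$ and $b_j$ are always incomparable, a basis of $\tau(C)$ cannot mix the two branches, so every basis is a maximal stack of intervals climbing either the $a$-chain or the $b$-chain. Maximality forces such a stack to be the \emph{infinite} climb $B=\{[0<a_0],[a_0<a_1],[a_1<a_2],\dots\}$ (or its $b$-analogue). The decisive point is cofinality: an element of $P$ lies above every $a_i$ if and only if it lies above every element of $C$ (equivalently, it is some $u_j$ or is $1$). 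Hence one computes $B^\perp=\beta(C^\uparrow)=\tau(C)^\perp$, the second equality being \Cref{lemma:derp}(a), and \Cref{lemma:chardacey}(b) then gives that $\tau(C)$ is Dacey. This is genuinely different from the finite case, where a finite branch $\{[0<a]\}$ has strictly more orthogonal partners than the whole ideal; here the infinite branch ``reaches the cut'' and sheds exactly the right partners. A dual computation, climbing down the $u$-chain, shows that $\beta(\{u_j:j\in\mathbb{N}\}\cup\{1\})$ is Dacey, using that this filter has no minimal element.

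It then remains to verify that \emph{every} orthoclosed subset of $\Q(P)$ is Dacey, and this is the step I expect to be the main obstacle. The intended route is a structural classification: because the only incomparabilities in $P$ are the pairs $a_i,b_j$, resolved only at the single cut $C$, each orthoclosed $X$ should decompose as $X=X_0\cup X_1$ with $X_0\perp X_1$ and $X_0\cup X_1$ orthoclosed, where $X_0$ is of chain type (Dacey by \Cref{lemma:chaintypeisdacey}) and $X_1$ is at most one ``cut-type'' piece (namely $\tau(C)$, $\beta(\{u_j:j\in\mathbb{N}\}\cup\{1\})$, or a principal down-set), so that \Cref{lemma:unionofDaceys} assembles the conclusion. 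Carrying out this classification rigorously---ruling out any orthoclosed set whose bases fail to capture it---is where the real work lies; the delicate part is confirming that no unexpected interaction among the $a$-, $b$- and $u$-families produces a non-chain-type orthoclosed piece other than the two already handled. If the classification holds as anticipated, \Cref{thm:dacey} upgrades Dacey-ness to the assertion that $L(\Q(P),\perp)$ is orthomodular, and the conjecture follows.
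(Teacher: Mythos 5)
This statement is posed as an open \emph{conjecture} in the paper---no proof is given there---so your attempt must stand entirely on its own. The parts you actually carry out are correct and capture what is very likely the right phenomenon. Your witness $P$ is bounded, infinite, and a non-lattice (the direct verification that $a_0\vee b_0$ fails is right, and your observation that the cut $C=\{0\}\cup\{a_i\}\cup\{b_i\}$ is closed but has \emph{no} maximal elements, so the obstruction used in (c)$\Rightarrow$(a) of \Cref{thm:main} via \Cref{fact:twomaxes} cannot arise, is exactly the correct diagnosis). The central computation is also sound: by \Cref{lemma:orthogonaliscomparable} a basis of $\tau(C)$ cannot mix the two branches, maximality forces it to climb one branch cofinally (any stack topping out at $a_m$ admits $[a_m<a_{m+1}]$ orthogonal to it), and cofinality gives $B^\perp=\beta(C^\uparrow)=\tau(C)^\perp$ by \Cref{lemma:derp}(a), so \Cref{lemma:chardacey}(b) applies. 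One small slip: maximal stacks need not be the consecutive climb $\{[0<a_0],[a_0<a_1],\dots\}$---jumps such as $\{[0<a_1],[a_1<a_3],\dots\}$ are also maximal, since nothing orthogonal fits inside a jump---but your computation only uses cofinality, so this is harmless.

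The genuine gap is the one you name yourself: Dacey-ness of $(\Q(P),\perp)$ requires \emph{every} orthoclosed subset to be Dacey, and you verify only $\tau(C)$ and its dual, deferring everything else to an unproven classification. Two concrete problems make that deferral more than a formality. First, the classification as stated is too coarse: there are orthoclosed subsets of $\Q(P)$ that are neither of chain type, nor principal, nor one of your two cut pieces. For instance $\beta(a_2^\uparrow)\cap\tau(C)=\{[a_i<a_j]:2\le i<j\}$, the orthocomplement of $\tau(a_2^\downarrow)\cup\beta(C^\uparrow)$, is orthoclosed, has no maximal elements, and is a ``shifted'' cut piece absent from your list (dually $\beta(C^\uparrow)\cap\tau(u_j^\downarrow)$); these each need their own cofinality argument, and one must also show that partial mixtures of the two branches always close up to a full cut piece (e.g.\ the closure of $\{[0<a_0],[0<b_0]\}$ is all of $\tau(C)$, not something smaller). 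Second, you cannot cite \Cref{lemma:chaintypeisdacey} as it stands: both the definition of chain type and the lemma's proof (induction on the finite set $\max(X)$) are formulated only for finite posets, while in your $P$ the relevant orthoclosed sets may have no maximal elements at all. An infinite-family analogue of \Cref{lemma:unionofDaceys} does go through verbatim, since $\bigl(\bigcup_i(B\cap X_i)\bigr)^\perp=\bigcap_i(B\cap X_i)^\perp$, but identifying the orthogonal Dacey pieces of an arbitrary orthoclosed set---the decomposition itself---is the actual content of the conjecture for this $P$, and it is missing. As submitted, then, this is a credible program with a well-chosen candidate and a correct key lemma, not a proof; the conjecture remains open.
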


\section*{Declarations}
\begin{description}
\item[\bf Funding:]
This research is supported by grants VEGA 2/0142/20 and 1/0006/19,
Slovakia and by the Slovak Research and Development Agency under the contracts
APVV-18-0052 and APVV-20-0069.
\item[\bf Other support:]The author expresses his gratitude for the possibility to present this material 
on the conference SSAOS 2022 in Tatranská Lomnica.
\item[\bf Conflict of interest/Competing interest:]None.
\item[\bf Availability of data and material:]Not applicable.
\item[\bf Code availability:]Not applicable.
\item[\bf Authors' contributions:]There is a single author. 
\end{description}


\begin{thebibliography}{10}

\bibitem{birkhoff1948lattice}
Garrett Birkhoff.
\newblock {\em Lattice theory}, volume~25.
\newblock American Mathematical Soc., 1948.

\bibitem{cattaneo1974abstract}
Gianpiero Cattaneo and Alessandro Manià.
\newblock Abstract orthogonality and orthocomplementation.
\newblock {\em Mathematical Proceedings of the Cambridge Philosophical
  Society}, 76(1):115–132, 1974.

\bibitem{cignoli1978deductive}
R~Cignoli.
\newblock Deductive systems and congruence relations in ortholattices, in,"
  math. logic.
\newblock In {\em Proc. 1st Brazil Conf}. Dekker, New York, 1978.

\bibitem{dacey1968orthomodular}
James~Charles Dacey~Jr.
\newblock {\em Orthomodular spaces}.
\newblock PhD thesis, University of Massachusetts Amherst, 1968.

\bibitem{kalmbach1977orthomodular}
Gudrun Kalmbach.
\newblock Orthomodular lattices do not satisfy any special lattice equation.
\newblock {\em Archiv der Mathematik}, 28(1):7--8, 1977.

\bibitem{lovasz1978kneser}
L{\'a}szl{\'o} Lov{\'a}sz.
\newblock Kneser's conjecture, chromatic number, and homotopy.
\newblock {\em Journal of Combinatorial Theory, Series A}, 25(3):319--324,
  1978.

\bibitem{macneille1937partially}
Holbrook~Mann MacNeille.
\newblock Partially ordered sets.
\newblock {\em Transactions of the American Mathematical Society},
  42(3):416--460, 1937.

\bibitem{paseka2022categories}
Jan Paseka and Thomas Vetterlein.
\newblock Categories of orthogonality spaces.
\newblock {\em Journal of Pure and Applied Algebra}, 226(3), 2022.
\newblock Cited by: 1; All Open Access, Green Open Access, Hybrid Gold Open
  Access.

\bibitem{paseka2022normal}
Jan Paseka and Thomas Vetterlein.
\newblock Normal orthogonality spaces.
\newblock {\em Journal of Mathematical Analysis and Applications}, 507(1),
  2022.
\newblock Cited by: 0; All Open Access, Green Open Access, Hybrid Gold Open
  Access.

\bibitem{walker1983graphs}
James~W Walker.
\newblock From graphs to ortholattices and equivariant maps.
\newblock {\em Journal of Combinatorial Theory, Series B}, 35(2):171--192,
  1983.

\bibitem{wilce2011test}
Alexander Wilce.
\newblock Test spaces.
\newblock In Kurt Engesser, Dov~M Gabbay, and Daniel Lehmann, editors, {\em
  Handbook of quantum logic and quantum structures: Quantum structures}.
  Elsevier, 2011.

\end{thebibliography}

\end{document}